\theoremstyle{plain}
\newtheorem{theorem}{Theorem}
\newtheorem{lemma}{Lemma}
\newtheorem{proposition}{Proposition}
\theoremstyle{remark}
\newtheorem{remark}{Remark}
\newtheorem{definition}{Definition}
\newtheorem{example}{Example}
\DeclareSymbolFont{bbold}{U}{bbold}{m}{n}
\DeclareSymbolFontAlphabet{\mathbbold}{bbold}
\newcommand{\ind}{\mathbbold{1}}
\newcommand{\R}{\mathbb{R}}
\newcommand{\N}{\mathbb{N}}
\renewcommand{\P}{\mathbb{P}}
\newcommand{\E}{\mathbb{E}}
\newcommand{\F}{\mathcal{F}}
\newcommand{\supp}{\mathrm{supp}}
\newcommand{\weak}{\tau_{w}}
\newcommand{\weakp}{\tau_{w}^p}
\newcommand{\kelbow}{k^{\textnormal{elb}}}
\newcommand{\clustermedoid}{C^{\textnormal{med}}}
\newcommand{\clusterelbow}{C^{\textnormal{elb}}}
\newcommand{\closed}{\mathrm{C}}
\newcommand{\cbdd}{\mathrm{K}}
\newcommand{\kurouter}{\mathrm{Ls}}
\newcommand{\kurinner}{\mathrm{Li}}
\newcommand{\kurlimit}{\mathrm{Lt}}
\newcommand{\dvecHaus}{\vec{d}_{\mathrm{H}}}
\newcommand{\dHaus}{d_{\mathrm{H}}}
\newcommand{\hausdorfftopo}{\tau_{H}}
\renewcommand{\emptyset}{\varnothing}
\newcommand{\diff}{\,\textnormal{d}}
\begin{document}

\begin{frontmatter}
\title{Asymptotic Theory of Geometric and Adaptive $k$-Means Clustering}

\begin{aug}
\author[A]{\fnms{Adam Quinn} \snm{Jaffe}\ead[label=e1]{a.q.jaffe@columbia.edu}},
\address[A]{Department of Statistics, Columbia University, \printead{e1}}
\end{aug}

\begin{abstract}
	We revisit Pollard's classical result on consistency for $k$-means clustering in Euclidean space, with a focus on extensions in two directions:
	first, to problems where the data may come from interesting geometric settings (e.g., Riemannian manifolds, reflexive Banach spaces, or the Wasserstein space);
	second, to problems where some parameters are chosen adaptively from the data (e.g., $k$-medoids or elbow-method $k$-means).
	Towards this end, we provide a general theory which shows that all clustering procedures described above are strongly consistent.
	In fact, our method of proof allows us to derive many asymptotic limit theorems beyond strong consistency.
	We also remove all assumptions about uniqueness of the set of optimal cluster centers.
\end{abstract}

\begin{keyword}[class=MSC]
\kwd[primary ]{62H30}
\kwd{62R20}
\kwd{60F99}
\kwd[; secondary ]{54C60}
\end{keyword}

\begin{keyword}
\kwd{$k$-means}
\kwd{$k$-medians}
\kwd{$k$-medoids}
\kwd{elbow method}
\kwd{non-Euclidean statistics}
\kwd{functional data analysis}
\kwd{Wasserstein space}
\kwd{set-valued analysis}
\end{keyword}

\end{frontmatter}


\setcounter{tocdepth}{1}
\tableofcontents

\section{Introduction}

A fundamental task in unsupervised learning is that of \textit{clustering}, namely, partitioning a set of data into a finite number of groups where elements within a group are similar (and, typically, elements between distinct groups are dissimilar).
Among the most common clustering methods is $k$-means clustering  \cite{MacQueen}:
For data points $Y_1,\ldots Y_n$ in the Euclidean space $\R^m$ and any $k\in\N:=\{1,2,\ldots\}$, the set of \textit{$k$-means cluster centers} is any solution to the set-indexed optimization problem
\begin{equation}\label{eqn:kmeans-emp}
	\begin{cases}
		\text{minimize}\qquad &\frac{1}{n}\sum_{i=1}^{n}\min_{x\in S}|x-Y_i|^2 \\
		\text{subject to}\qquad &S\subseteq \R^m \text{ and } 1\le \#S \le k.
	\end{cases}
\end{equation}
Intuitively speaking, a set of $k$-means cluster centers for these data points is a set of points $S_n$ in $\R^m$ to at least one of which all data are optimally close; the \textit{$k$-means clusters} are then the sets
\begin{equation*}
\{Y_i : |x-Y_i|\le |x'-Y_i| \text{ for all } x'\in S_n\}
\end{equation*}
indexed by $x\in S_n$.
The problem of $k$-means clustering is easily seen to be equivalent to \textit{vector quantization} \cite{AbayaWise, Linder, Laloe, Quant_kmeans} and to \textit{principal points} \cite{Flury1, Flury2, ClusteringFunctional} that have been studied in other disciplines.

The initial theoretical justification for $k$-means is Pollard \cite{Pollard} in which strong consistency is shown  for a wide class of non-parametric models; that is, if $Y_1,Y_2,\ldots$ is an independent, identically-distributed (IID) sequence of random variables in a finite-dimensional Euclidean space $\R^m$, and if a number of technical hypotheses hold, then the sequence of solutions $S_1,S_2,\ldots$ to the optimization problems above has a limit almost surely, in a sense which we will soon make precise.
Moreover, the limit is a solution to the set-indexed optimization problem
\begin{equation}\label{eqn:kmeans-pop}
\begin{cases}
	\text{minimize}\qquad &\int_{\R^m}\min_{x\in S}|x-y|^2\, \diff\mu(y) \\
	\text{subject to}\qquad &S\subseteq \R^m \text{ and }1\le \#S \le k.
\end{cases}
\end{equation}
The arguments therein rely on delicate calculations which combine uniform laws of large numbers with some recursive structure that relates the $k$-means cluster centers to the $(k-1)$-means cluster centers. 

While this strong consistency result and its subsequent developments \cite{Parna1,Parna2,Parna3,Lember} are applicable in some settings, they are lacking in a few important ways.
In this work, we focus on extending such results in two directions.
First, we want to provide guarantees for $k$-means clustering applied to data that live in geometric settings other than $\R^m$.
Second, we want to provide guarantees for variants of $k$-means clustering in which some parameters are chosen adaptively from the data.
At the same, we aim to develop an asymptotic theory which establishes various limit theorems beyond strong consistency.


The remainder of this introduction is divided into further subsections.
In Subsection~\ref{subsec:variants} we define several geometric and adaptive clustering procedures which are variants of $k$-means clustering that are of interest in statistics and machine learning.
In Subsection~\ref{subsec:results} we precisely state our main results.
In Subsection~\ref{subsec:related-lit} we review some literature related to the present work.
The remainder of the paper is dedicated to the proofs of our main results, although some proofs are deferred to the supplementary material 

Finally, we make a remark on a particular piece of terminology used throughout the paper.
While we will always discuss \textit{clustering procedures}, some authors prefer to discuss \textit{clustering algorithms}.
To make things concrete for this paper, a \textit{procedure} will always refer to the setting in which one has oracle access to the solution set of each optimization problem.
In contrast, an \textit{algorithm} always refers to a particular method of computing such optimizers (or near-optimizers).
With a few exceptions, we will only consider procedures in this work, although the algorithmic questions related to clustering are themselves highly non-trivial.

\subsection{Variants of $k$-means clustering}\label{subsec:variants}

In addition to the classical $k$-means clustering problem introduced above, we now describe several variants which have become important in various statistical applications.
The main results of the paper, stated in the next subsection, will allow us to treat all of these examples simultaneously.

\subsubsection{Geometric variants}\label{subsubsec:geometric}

Since many modern statistical problems involve data that do not simply live in Euclidean space $\R^m$, there has been growing interest in applications of $k$-means clustering to more interesting geometric setttings.
Presently, we give an overview of some of these geometries and the relevant applications.

\medskip

\noindent
\textbf{Riemannian Manifolds.} Applications in computer vision \cite{SrivastavaRiemannianClustering}, bioinformatics \cite{ProteinClustering}, and air traffic control \cite{LeBrigantRiemannian} have introduced the following variant of $k$-means clustering, where $\mathcal{M}$ denotes a Riemannian manifold, $d$ denotes its geodesic distance, and $q_1,\ldots, q_n$ denote some arbitrary data points in $\mathcal{M}$:
\begin{equation*}
	\begin{cases}
		\text{minimize}\qquad &\frac{1}{n}\sum_{i=1}^{n}\min_{p\in S}d^2(p,q_i) \\
		\text{subject to}\qquad &S\subseteq \mathcal{M} \text{ and } 1\le \#S \le k.
	\end{cases}
\end{equation*}
Asymptotic consistency for this so-called \textit{Riemannian $k$-means clustering problem} follows from general theory for Heine-Borel metric spaces, that is metric spaces whose closed balls are compact; that consistency should hold for such spaces was already remarked by Pollard in \cite{Pollard}, and subsequently proven in \cite{Parna1, Parna2}.
See also \cite{ManifoldClustering, GeomStats} for related computational concerns.

\medskip

\noindent
\textbf{Banach Spaces.} In applications of functional data analysis, for example to meteorology and finance \cite{ClusteringFunctional}, one often encounters the following problem:
for a reflexive Banach space ${(\mathcal{B},\|\cdot\|)}$ and some functions $\phi_1,\ldots, \phi_n$ in $\mathcal{B}$, solve
\begin{equation*}
	\begin{cases}
		\text{minimize}\qquad &\frac{1}{n}\sum_{i=1}^{n}\min_{\psi\in S}\|\psi-\phi_i\|^2 \\
		\text{subject to}\qquad &S\subseteq \mathcal{B} \text{ and } 1\le \#S \le k.
	\end{cases}
\end{equation*}
For this problem of clustering functional data, many general asymptotic consistency results are known, including with respect to weak convergence in reflexive Banach spaces \cite{Thorpe,Parna3} and with respect to strong convergence in uniformly convex Banach spaces \cite{Lember}.

\medskip

\noindent
\textbf{Wasserstein Space.} Recent applications of clustering distribution data, for example in demography \cite{delBarrioClustering} and remote sensing \cite{PapayiannisClustering}, have introduced the following clustering problem, where $\mathcal{P}_2(\R^m)$ denotes the 2-Wasserstein space of probability measures and $W_2$ the 2-Wasserstein metric on $\mathcal{P}_2(\R^m)$:
if one has data $\nu_1,\ldots, \nu_n$ which are probability measures on $\R^m$, then it is natural to try to solve
\begin{equation*}
	\begin{cases}
		\text{minimize}\qquad &\frac{1}{n}\sum_{i=1}^{n}\min_{\mu\in S}W_2^2(\mu,\nu_i) \\
		\text{subject to}\qquad &S\subseteq \mathcal{P}_2(\R^m)\text{ and } 1\le \#S \le k,
	\end{cases}
\end{equation*}
which is called the \textit{Wasserstein $k$-means} or \textit{Wasserstein $k$-barycenters} problem.
As far as we are aware, the literature contains no results regarding asymptotic theory for this problem as $n\to\infty$.
Rather, the focus of existing works is computational, and aims to develop efficient implementation or approximation of the optimal cluster centers \cite{ZhuangWasserstein, VerdinelliWasserman}.

\medskip

As we will later see, it natural to view all of these problems as $k$-means clustering in a metric space, which must be assumed to have some mild additional structure.
This will allow us to also treat some more niche examples, like the tropical projective metric with applications to phylogeny \cite{TropTorusKMeans}, the partial matching metric with applications to shape matching \cite{PersHomKMeans}, and the rotationally-invariant Wasserstein metric with applications to cryogenic electronic microcroscopy (cryo-EM) \cite{WassersteinTomography}.

\subsubsection{Adaptive variants}\label{subsubsec:adaptive}

In most practical applications of $k$-means clustering, some elements of the problem are not fixed but rather they are chosen as a function of the data.
Presently, we overview two of these adaptive clustering procedures and some motivations.
Throughout this section, let $(\mathcal{X},d)$ denote a general metric space and $y_1,\ldots, y_n$ some points in $\mathcal{X}$.

\medskip

\noindent
\textbf{$\boldsymbol{k}$-medoids.} In many applications of clustering it is desirable to have cluster centers which are themselves data points rather than just abstract points in space.
(See \cite[Chapter~2]{KR_book}, \cite[Section~14.3.10]{ESL}, or \cite[p. 208-209]{Molar}.).
Towards, this end \cite{Kaufman}, it is natural to fix $k\in \N$ and to consider
\begin{equation}\label{eqn:kp-medoids}
\begin{cases}
\text{minimize}\qquad &\frac{1}{n}\sum_{i=1}^{n}\min_{x\in S}d^2(x,y_i) \\
\text{subject to}\qquad &S\subseteq \{y_1,\ldots y_n\} \text{ and } 1\le\#S \le k,
\end{cases}
\end{equation}
called the \textit{$k$-medoids clustering problem.}
Recent work \cite{JiangAriasCastro} has shown the consistency of $k$-medoids under some assumptions, but most existing work in this domain is computational \cite{ParkJunMedoids, BanditPAM, LinearMedoids, UltraFastMedoids}.

\medskip

\noindent
\textbf{$\boldsymbol{k}$-means with elbow method.} One of the fundamental problems of $k$-means clustering is that the practitioner must somehow decide the number of clusters $k$ to choose; a common approach, mentioned in nearly every introductory textbook on machine learning (see \cite[Section~14.3.11]{ESL}, \cite[p. 247-248]{GeronML}, or \cite[Section 7.9]{Alpaydin}) is to select $k$ via the so-called \textit{elbow method} which chooses the value of $k$ for which the added model complexity experiences maximally dimishing returns.
More precisely, for each $k\in\N$ we set
\begin{equation*}
	m_{k} := \inf_{\substack{S\subseteq \mathcal{X} \\ 1\le\#S\le k}}\frac{1}{n}\sum_{i=1}^{n}\min_{x\in S}d^2(x,y_i),
\end{equation*}
which is the minimal objective achievable by any set of cluster centers.
Then define the discrete second derivative for $k\ge 2$ via $\Delta^2m_{k}  :=m_{k+1} + m_{k-1}  - 2m_{k}$, and set
\begin{equation*}
	\kelbow :=\min\{\arg\max\{\Delta^2m_{k}: k \in\N, k\ge 2\}\}.
\end{equation*}
(Note that the restriction $k\ge 2$ can be understood as taking the convention that $m_{0} = \infty$, which equivalently means that $k=1$ will never be selected as the number of clusters.)
Now we consider
\begin{equation}\label{eqn:adapt-kp-means}
	\begin{cases}
		\text{minimize}\qquad &\frac{1}{n}\sum_{i=1}^{n}\min_{x\in S}d^2(x,y_i) \\
		\text{subject to}\qquad &S\subseteq \mathcal{X} \text{ and } 1\le \#S \le \kelbow.
	\end{cases}
\end{equation}
This is a naive formalism of the well-known procedure which selects the ``elbow'' from the plot of the sequence $\{m_k\}_{k\in\N}$.
Despite the ubiquity of this adaptive method of selecting $k$, we are not aware of any literature providing asymptotic theory for the resulting adaptive clustering procedure.

\subsection{Statement of results}\label{subsec:results}

To begin, let us describe the clustering procedures of interest.
Throughout, we fix a metric space $(\mathcal{X},d)$ and a real number $p\ge 1$.
For any distribution $\mu\in \mathcal{P}(\mathcal{X})$ (here, $\mathcal{P}(\mathcal{X})$ denotes the space of Borel probability measures on $(\mathcal{X},d)$) with $\int_{\mathcal{X}}d^p(x,y)\, \diff\mu(y) < \infty$ for all $x\in \mathcal{X}$, any integer $k\in\N$,  and any subset $R\subseteq \mathcal{X}$, we consider the problem of choosing the best set of cluster centers $S\subseteq R$ with $1\le \#S\le k$, as quantified by the loss
\begin{equation*}
	\int_{\mathcal{X}}\min_{x\in S}d^p(x,y)\, \diff\mu(y).
\end{equation*}
More precisely, we write $C_{p}(\mu,k,R)$ for the set of all $S\subseteq R$ with $1\le \#S\le k$ satisfying
\begin{equation*}
	\int_{\mathcal{X}}\min_{x\in S}d^p(x,y)\, \diff\mu(y) = \inf_{\substack{S'\subseteq R \\1\le \#S'\le k}} \int_{\mathcal{X}}\min_{x\in S'}d^p(x,y)\, \diff\mu(y) =:m_{k,p}(\mu).
\end{equation*}
Note that $C_{p}(\mu,k,R)$ is, in general, a set of subsets of $\mathcal{P}(\mathcal{X})$.
We call $C_p(\mu,k,R)$ the \textit{set of sets of $R$-restricted $(k,p)$-means cluster centers.}

First we describe the geometric conditions required on the metric space $(\mathcal{X},d)$, which have been studied in detail by the author in the companion paper \cite{JaffeInfDim}.
Rougly speaking, our results will require that $(\mathcal{X},d)$ admits a suitably powerful notion of weak convergence, to be described below.
Importantly, in \cite[Section~2.2]{JaffeInfDim} it is shown that Riemannian manifolds, uniformly convex Banach spaces, and the Wasserstein space all satisfy this property.
Further, results contained in \cite[Section~2.4]{JaffeInfDim} show that the existence of a suitable weak convergence is also satisfied by the additional examples of the tropical projective metric, the partial matching metric, and the the rotationally-invariant invariant Wasserstein metric.
Thus, our results apply to all of the metric spaces of Subsubsection~\ref{subsubsec:geometric}.

To state this geometric condition precisely, let us say that a metric space $(\mathcal{X},d)$ \textit{admits a weak convergence} if their exists a Hausdorff topology $\tau$ on $\mathcal{X}$ satisfying the following conditions:
\begin{enumerate}
	\item[(W1)] If $\{x_n\}_{n\in\N}$ and $y\in \mathcal{X}$ have $\sup_{n\in\N}d(x_n,y)<\infty$, then there exists a subsequence $\{n_j\}_{j\in\N}$ and a point $x\in \mathcal{P}(\mathcal{X})$ with $x_{n_j}\to x$ in $\tau$.
	\item[(W2)] If $\{x_n\}_{n\in\N}$, and $x$ in $\mathcal{X}$ have $x_n\to x$ in $\tau$, then we have $d(x,y)\le \liminf_{n\to\infty}d(x_n,y)$ for all $y\in \mathcal{X}$.
	\item[(W3)] If $\{x_n\}_{n\in\N}$ and $x$ in $\mathcal{X}$ have $x_n\to x$ in $\tau$ and if there exists some $y\in \mathcal{X}$ with $d(x_n,y)\to d(x,y)$, then $x_n\to x$ in $d$.
\end{enumerate}
To see that the metric spaces above indeed admit a weak convergence, we note that one can take $\tau$ to be the metric topology in the case of Riemannian manifolds, one can take $\tau$ to be the weak topology in the case of uniformly convex Banach spaces, and one can take $\tau$ to be the topology of weak convergence of measures in the case of the Wasserstein space; for the full analytic details, we direct the reader to \cite{JaffeInfDim}.

\begin{remark}
	A potential cause for confusion throughout the paper is that there will be, occasionally, two topologies at play: the topology generated by the metric $d$ and the topology $\tau$.
	In general, most topological statements refer to the topology generated by $d$, but $\tau$ must occasionally be used.
	To remedy this, we adopt the following conventions:
	when no explicit reference is made, it will be understood that the topology generated by $d$ is the relevant topology; when there is risk of ambiguity, we will explicitly identify which topology is relevant.
	Also, by a slight abuse of notation, we use $d$ for both the metric and also for the topology generated by this metric.
\end{remark}

For example, $\mathcal{P}(\mathcal{X})$ refers to the space of all Borel probability measures on $\mathcal{X}$, where the Borel $\sigma$-algebra is generated by the topology of $d$.
Moreover, for $\mu\in\mathcal{P}(\mathcal{X})$, we write $\supp(\mu):=\{x\in \mathcal{X}: \textnormal{if $U$ is $d$-open and $x\in U$, then $\mu(U)>0$}\}$ and $\supp^{\tau}(\mu):=\{x\in \mathcal{X}: \textnormal{if $U$ is $\tau$-open and $x\in U$, then $\mu(U)>0$}\}$ to denote the topological support of $\mu$ with respect to $d$ and $\tau$, respectively.
Note, importantly, that if $\mu=\frac{1}{n}\sum_{i=1}^{n}\delta_{y_i}$ for some $n\in\N$ and $y_1,\ldots, y_n\in X$, then $\supp(\mu) = \supp^{\tau}(\mu) = \{y_1,\ldots, y_n\}$.

Second, we describe the conditions on the adaptive clustering procedures that we will require.
To do this, we observe that $C_p(\mu,k,R)$ indeed unifies all of the clustering procedures of Subsection~\ref{subsubsec:adaptive}:
writing $\bar \mu_n := \frac{1}{n}\sum_{i=1}^{n}\delta_{y_i}$ for the empirical measure of the first $n\in\N$ samples, we see that $C_{k,2}(\bar \mu_n) := C_{2}(\bar \mu_n,k,\mathcal{X})$ is exactly the set of sets of $k$-means cluster centers, $\clustermedoid_{k,2}(\bar \mu_n) := C_2(\bar \mu_n,k,\{y_1,\ldots, y_n\})$ is exactly the set of sets of $k$-medoids cluster centers, and $\clusterelbow_{2}(\bar \mu_n):=C_2(\bar \mu_n,\kelbow(\bar \mu_n),\mathcal{X})$ is exactly the set of sets of $k$-means cluster centers when $k$ is chosen adaptively according to the elbow method.
So, it is natural that our results will, roughly speaking, show that we have $C_p(\mu_n,k_n,R_n)\to C_p(\mu,k,R)$ provided that we have $\mu_n\to \mu$, $k_n\to k$, and $R_n\to R$ in suitable senses.

To state this convergence precisely, we need to introduce some notions of convergence of sets.
First, for non-empty compact subsets $A,A'\subseteq \mathcal{X}$, we define their \textit{Hausdorff distance} to be
\begin{equation*}
	\dHaus(A,A') := \inf\left\{r \ge 0: A\subseteq \bigcup_{x\in A'}\bar B^d_{r}(x) \text{ and } A'\subseteq \bigcup_{x\in A}\bar B^d_{r}(x) \right\},
\end{equation*}
where $\bar B^d_r(x):=\{y\in \mathcal{X}: d(x,y)\le r \}$ is the $d$-closed ball of radius $r\ge 0$ around $x\in \mathcal{X}$.
Convergence with respect to the Hausdorff metric is rather strong, so we also need the weaker notion of \textit{Kuratowski convergence}; for $\{R_n\}_{n\in\N}$ and $R$ any closed subsets of $\mathcal{X}$ (with respect to the topology generated by $d$), we define:
\begin{align*}
	\underset{n\to\infty}{\kurouter}R_n &:= \left\{x\in \mathcal{X}:{\begin{matrix}{\mbox{for all open neighborhoods }}U{\mbox{ of }}x,\\ U\cap R_{n}\neq \emptyset \mbox{ for  infinitely many } n\in\N\end{matrix}}\right\} \\
	\underset{n\to\infty}{\kurinner}R_n &:= \left\{x\in \mathcal{X}:{\begin{matrix}{\mbox{for all open neighborhoods }}U{\mbox{ of }}x,\\U\cap R_{n}\neq \emptyset {\mbox{ for large enough }}n \in \N\end{matrix}}\right\},
\end{align*}
called the \textit{Kuratowski upper limit} and the \textit{Kuratowski lower limit}, respectively.
We always have $\kurinner_{n\in\N}R_n \subseteq \kurouter_{n\in\N}R_n$, and we write $\kurlimit_{n\in\N}R_n = R$ to mean that we in fact have $\kurouter_{n\in\N}R_n \subseteq R\subseteq \kurinner_{n\in\N}R_n$.
Similarly, we write $\kurinner_{n\in\N}^{\tau}R_n$ and $\kurouter_{n\in\N}^{\tau}R_n$ when $\{R_n\}_{n\in\N}$ are $\tau$-closed sets and the neighborhoods above are taken to be $\tau$-open. 
We will also need some notions of random sets; their precise definitions can be found in Subsection~\ref{subsec:subsets}.

Now we can give the primary statistical setting of interest.
For any $\mu\in \mathcal{P}(\mathcal{X})$, we let $(\Omega,\F,\P)$ be a complete probability space on which is defined an  IID sequence $Y_1,Y_2,\ldots$ of $\mathcal{X}$-valued random variables with common distribution $\mu$, and write $\bar \mu_n := \frac{1}{n}\sum_{i=1}^{n}\delta_{Y_i}$ for the empirical probability measure of the first $n\in\N$ samples.
Then we have the following:

\begin{theorem}\label{thm:Haus-consistency}
	Suppose that $(\mathcal{X},d)$ is a separable metric space admitting a weak convergence, and that $\mu\in\mathcal{P}(\mathcal{X})$ satsfies $\int_{\mathcal{X}}d^p(x,y)\, \diff\mu(y) < \infty$ for all $x\in \mathcal{X}$.
	Then, for any random $\tau$-closed subsets $\{R_n\}_{n\in\N}$ and $R$ of $\mathcal{X}$ and any random positive integers $\{k_n\}_{n\in\N}$ and $k$, we have
	\begin{equation*}
		\max_{S_n\in C_{p}(\bar \mu_n,k_n,R_n)}\min_{S\in C_{p}(\mu,k,R)}\dHaus(S_n,S)\to 0
	\end{equation*}
	almost surely on
	\begin{align*}
		&\{k\le \#(R\cap\supp(\mu))\} \cap \{k_n \neq k \text{ finitely often}\}\\
		&\qquad\cap\{\kurouter^{\tau}_{n\in\N}R_n\subseteq R\subseteq \kurinner_{n\in\N}R_n\}.
	\end{align*}
\end{theorem}

Some remarks about this result are due.
First, we regard Theorem~\ref{thm:Haus-consistency} as a guarantee of ``no false positives'', in an asymptotic sense, since it establishes that any set of empirical $R_n$-restricted $(k_n,p)$-means cluster centers must be close to some set of population $R$-restricted $(k,p)$-means cluster centers, as $n\to\infty$.
Second, we emphasize that the convergence above holds with respect to $d$ and not with respect to $\tau$; in this sense, the existence of a weak convergence is merely a geometric property of the space $(\mathcal{X},d)$, but it does not feature in the conclusion.
Third, we note (see Proposition~\ref{prop:cpt-operator}) that one can derive a similar but weaker conlusion under the weaker assumption that $(\mathcal{X},d)$ is a separable metric space admitting a topology $\tau$ satisfying only (W1) and (W2); this applies, for example, to the case of reflexive Banach spaces and allows us to recover and strengthen the results of \cite{Thorpe}.

The majority of the body of the paper is spent proving Theorem~\ref{thm:Haus-consistency}, so let us show that it indeed implies strong consistency for the clustering procedures introduced in Subsection~\ref{subsec:variants}.
As we have already discussed, all of the metric spaces of Subsection~\ref{subsubsec:geometric} admit a weak convergence.
To see that the adaptive variants described in Subsection~\ref{subsubsec:adaptive} are covered by this result, we consider the following.
Suppose $\int_{\mathcal{X}}d^p(x,y)\, \diff\mu(y) < \infty$ holds for all $x\in \mathcal{X}$, as well as $\#\supp(\mu) = \infty$ for simplicity, which certainly holds in most applications.
\begin{itemize}
\item For $(k,p)$-means, simply take $k\in\N$ and $k_n = k$ and $R_n = R = \mathcal{X}$ for $n\in\N$.
\item For $(k,p)$-medoids, take $k\in\N$ and $R = \supp^{\tau}(\mu)$, as well as $k_n = k$ and $R_n = \{Y_1,\ldots, Y_n\}$ for all $n\in\N$.
We may conclude that $(k,p)$-medoids is strongly consistent, provided that we have $\kurouter_{n\in\N}^{\tau}\supp^{\tau}(\bar \mu_n)\subseteq \supp^{\tau}(\mu)\subseteq \kurinner_{n\in\N}\supp^{\tau}(\bar \mu_n)$ almost surely; we prove this easily in Lemma~\ref{lem:supp-converge} below.
\item For elbow-method $(k,p)$-means, take $k = \kelbow_p(\mu)$ and $R = \mathcal{X}$, as well as $k_n = \kelbow_p(\bar \mu_n)$ and $R_n = \mathcal{X}$ for all $n\in\N$.
We may conclude that elbow-method $(k,p)$-means is strongly consistent, provided that we have $\kelbow_p(\bar \mu_n)\to \kelbow_p(\mu)$ almost surely; we prove this in Proposition~\ref{prop:elbow-k-cts} below, under the assumption that $\#\arg\max\{\Delta^2m_{k,p}(\mu): k\in\N, k\ge 2\} =1$, which simply means that $\mu$ has a well-defined number of clusters in the sense of the elbow method.
\end{itemize}
Notice that, in all of these settings, strong consistency follow immediately from Theorem~\ref{thm:Haus-consistency} plus some auxiliary results.

Next, we discuss our method of proof, which is novel and has many advantages over existing methods.
To state it, write $\closed(\mathcal{X})$ for the collection of $d$-closed subsets of $\mathcal{X}$ and write $\cbdd(\mathcal{X})$ for the collection of non-empty $d$-compact subsets of $\mathcal{X}$.
It turns out that $\dHaus$ makes $\cbdd(\mathcal{X})$ into a metric space itself, so we can consider the space $\cbdd(\cbdd(\mathcal{X}),\dHaus)$; this is the collection of non-empty $\dHaus$-compact sets of non-empty $d$-compact subsets of $\mathcal{X}$.
Our proof is to show directly that these spaces can be ``topologized'' such that the deterministic clustering map
\begin{equation*}
	C_p:\mathcal{P}(\mathcal{X})\times \N\times\closed(\mathcal{X})\to \cbdd(\cbdd(\mathcal{X}),\dHaus)
\end{equation*}
is ``continuous'' in a suitable sense.
These fully deterministic statements mean that certain ``convergent sequences'' in the domain are mapped to ``convergent sequences'' in the codomain, although the particular notions of convergence we are interested in may be too degenerate to correspond to convergence in a bona fide topology.
This ``continuity'' is proven in our main technical result (Proposition~\ref{prop:cpt-operator}) which readily implies Theorem~\ref{thm:Haus-consistency}.
 
By this continuity perspective, we see that the strong consistency for clustering procedures is merely descended from an analogous strong law of large numbers for empirical measures of IID samples.
Moreover, through this lens it is easy to see that there is nothing special about our apparent focus on strong limit theorems; we can actually descend many types of limit theorems for empirical measures down to analogous limit theorems for clustering procedures.
We conclude this subsection by describing two applications of this, which we will carefully prove and analyze at the end of the paper; for the sake of simplicity, we focus on the case of non-adaptive $k$ for these resuts.

One important application of this perspective is analyzing strong consistency for clustering procedures when applied to data coming from a more complicated dependency structure, for example from a suitable Markov chain (MC).
Such considerations may arise when data is only assumed to be weakly dependent, or, in Bayesian statistics, when one attempts to apply clustering to a posterior distribution for which samples are computed via Markov chain Monte Carlo (MCMC).
For such settings we have the following result:

\begin{theorem}\label{thm:MC-consistency}
	Suppose that $(\mathcal{X},d)$ is a separable metric space admitting a weak convergence, and that $Y_1,Y_2,\ldots$ is an aperiodic Harris-recurrent Markov chain in $\mathcal{X}$ with stationary distribution $\mu$ satisfying $\int_{\mathcal{X}}d^p(x,y)\, \diff\mu(y) < \infty$ for all $x\in \mathcal{X}$.
	Then, for any $k\le \#\supp(\mu)$, we have
	\begin{equation*}
		\max_{S_n\in C_{k,p}(\bar \mu_n)}\min_{S\in C_{k,p}(\mu)}\dHaus(S_n,S)\to 0
	\end{equation*}
	almost surely.
\end{theorem}

This gives a wide condition guaranteeing strong consistency for $(k,p)$-means when applied to data coming from a MC.
A particular case of interest from the perspective of MCMC is $\mathcal{X} = \R^m$ for $m\in\N$ with $d$ the usual Euclidean metric, and where $\mu$ and $\P(Y_2\in\,\cdot\,|\,Y_1=x)$ for all $x\in \R^m$ have strictly positive densities with respect to the Lebesgue measure; in this case we see that the moment condition $\int_{\R^m}d^p(x,y)\, \diff\mu(y) < \infty$ readily implies strong consistency.

The question of strong consistency for adaptive clustering procedures applied to data coming from a MC is, interestingly, more subtle.
As we detail in Subsection~\ref{subse:results-MCs}, it turns out that a naive application of $(k,p)$-medoids can fail to be strongly consistent, but we are easily able to use the tools of this paper to develop a slight adaptation of $(k,p)$-medoids which is strongly consistent.

Another important application of our perspective is analyzing the quantitative rate of convergence for clustering procedures, for example by means of understanding its large deviations behavior.
For this we return to the setting wherein $\mu\in \mathcal{P}(\mathcal{X})$ is a fixed probability measure and $Y_1,Y_2,\ldots$ are independent, identically-distributed samples of $\mathcal{X}$-valued random variables with common distribution $\mu$.
Then:

\begin{theorem}\label{thm:LDP}
	Suppose $(\mathcal{X},d)$ is a Polish metric space admitting a weak convergence, and that $\int_{\mathcal{X}}\exp(\alpha d^p(x,y))\, \diff\mu(y) < \infty$ holds for all $\alpha > 0$ and all $x\in \mathcal{X}$.
	Then, for all $k\le \#\supp(\mu)$  and  $\varepsilon > 0$, we have
	\begin{equation*}
	\limsup_{n\to\infty}\frac{1}{n}\log \P\left(\sup_{S_n\in C_{k,p}(\bar \mu_n)}\inf_{S\in C_{k,p}(\mu)}\dHaus(S_n,S)\ge \varepsilon\right) \le -c_{k,p}(\mu,\varepsilon),
	\end{equation*}
	for some $c_{k,p}(\mu,\varepsilon) > 0$.
\end{theorem}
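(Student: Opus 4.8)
The plan is to \emph{descend} the large deviations principle for empirical measures through the continuity of $C_p$, in exactly the same spirit as the consistency results, but now quantifying the ``bad'' event at the level of the measure argument. First I would invoke the Sanov-type large deviations principle for the empirical measures: under the hypothesis that $\int_X \exp(\alpha d^p(x,y))\, d\mu(y) < \infty$ for all $\alpha > 0$, the sequence $\{\bar\mu_n\}_{n\in\N}$ satisfies a large deviations principle in the topology $\weakp$ with good rate function given by the relative entropy $H(\,\cdot\mid\mu)$; this exponential-moment condition is in fact necessary and sufficient for the principle to hold in this finer topology, as opposed to the weak topology where Sanov's theorem holds unconditionally. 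The Heine-Borel property together with $k\le\#\supp(\mu)$ serves only to place us in the regime where Theorem~\ref{thm:Haus-consistency} applies, so that the relevant continuity of $C_p$ at $\mu$ in the argument $\mu$ is available.

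The second ingredient is a quantitative, one-sided separation statement extracted from the deterministic continuity underlying Theorem~\ref{thm:Haus-consistency}. Fixing $\varepsilon > 0$, I would show that the bad set
\[
A_\varepsilon := \Bigl\{\nu\in\mathcal{P}(X): \max_{S'\in C_p^k(\nu)}\min_{S\in C_p^k(\mu)}\dHaus(S',S)\ge\varepsilon\Bigr\}
\]
stays a positive $\weakp$-distance away from $\mu$. This follows by contradiction: were there a sequence $\nu_j\to\mu$ in $\weakp$ with $\nu_j\in A_\varepsilon$ for all $j$, then the deterministic continuity of $C_p$ in its measure argument (applied with constant $k$ and constant domain $R = X$) would force $\max_{S'\in C_p^k(\nu_j)}\min_{S\in C_p^k(\mu)}\dHaus(S',S)\to 0$, contradicting the lower bound $\varepsilon$. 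Hence there is some $\delta = \delta(\varepsilon) > 0$ for which the open $p$-Wasserstein ball $B_\delta(\mu)$ is disjoint from $A_\varepsilon$, so that
\[
\Bigl\{\max_{S_n\in C_p^k(\bar\mu_n)}\min_{S\in C_p^k(\mu)}\dHaus(S_n,S)\ge\varepsilon\Bigr\}\subseteq\{\bar\mu_n\in F\},\qquad F := B_\delta(\mu)^c,
\]
where $F$ is $\weakp$-closed since $\weakp$ is metrized by the $p$-Wasserstein metric $W_p$.

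Combining these, I would apply the large deviations upper bound over the closed set $F$,
\[
\limsup_{n\to\infty}\frac{1}{n}\log\P(\bar\mu_n\in F)\le-\inf_{\nu\in F}H(\nu\mid\mu),
\]
and set $c_{k,p}(\mu,\varepsilon) := \inf_{\nu\in F}H(\nu\mid\mu)$. It remains to verify that this constant is strictly positive, which is exactly where the goodness of the rate function is essential: since $H(\,\cdot\mid\mu)$ has compact sublevel sets and vanishes only at $\mu$, if the infimum over the closed set $F$ were zero then compactness would produce a minimizer $\nu^*\in F$ with $H(\nu^*\mid\mu) = 0$, forcing $\nu^* = \mu$ and contradicting $\mu\notin F$. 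Thus $c_{k,p}(\mu,\varepsilon) > 0$, and only the upper half of the LDP is needed throughout.

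I expect the main obstacle to be the passage from the sequential continuity of $C_p$, as it is used inside the proof of Theorem~\ref{thm:Haus-consistency}, to the \emph{separation} statement that $A_\varepsilon$ is bounded away from $\mu$ in $\weakp$. Care is needed to ensure that the relevant continuity genuinely holds along \emph{arbitrary} deterministic approximating sequences in the Wasserstein topology, rather than merely along the almost-sure trajectories of the empirical measures for which the theorem is stated. A secondary, more routine point is confirming that the exponential-moment hypothesis is precisely what upgrades Sanov's theorem from the weak topology to $\weakp$, so that $F$ may legitimately be treated as closed in the same topology against which the continuity of $C_p$ was established.
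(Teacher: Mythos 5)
Your proof is correct, and it shares the paper's overall skeleton---descend the Sanov-type large deviations principle for $\{\bar\mu_n\}_{n\in\N}$ in $(\mathcal{P}_p(X),\weakp)$ (the same citation, requiring exactly your exponential-moment hypothesis) through the deterministic continuity of the clustering map, Proposition~\ref{prop:clustering-Hausdorff-cty}, which indeed holds along \emph{arbitrary} $\weakp$-convergent sequences, with non-singularity of $(\mu,k,X)$ supplied by Lemma~\ref{lem:small-k}---but it diverges from the paper in both key sub-steps. First, the paper takes as its closed set the bad set $A=\{\nu:\dvecHaus(C_{k,p}(\nu),C_{k,p}(\mu))\ge\varepsilon\}$ itself, proving closedness via the triangle inequality \eqref{dvecHaus-triangle-ineq} together with Proposition~\ref{prop:clustering-Hausdorff-cty} applied at an \emph{arbitrary} limit point $\nu$; you instead enlarge to $F=B_\delta(\mu)^c$, the complement of a $p$-Wasserstein ball, which requires continuity only at the single point $\mu$ plus metrizability of $\weakp$ by the Wasserstein distance (legitimate here, since Heine--Borel implies Polish; beware only that the paper's symbol $W_p$ denotes the clustering cost, not that metric). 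Second, for positivity of the rate the paper proves and invokes the Talagrand-type Lemma~\ref{lem:KL-exp-mom-weakp} (entropy convergence plus exponential moments forces $\weakp$-convergence) followed by another application of Proposition~\ref{prop:clustering-Hausdorff-cty}, whereas you use the standard good-rate-function argument (compact sublevel sets, lower semicontinuity, and $H(\nu|\mu)=0$ iff $\nu=\mu$), bypassing that lemma entirely. The trade-off: the paper's route yields the exact constant $c_{k,p}(\mu,\varepsilon)=\inf\{H(\nu|\mu):\nu\in A\}$, which it goes on to analyze (e.g.\ the question of sub-Gaussian scaling in $\varepsilon$), while your constant $\inf_{F}H(\cdot|\mu)$ is an infimum over the strictly larger set $F\supseteq A_\varepsilon$, hence is generally smaller and depends on an unspecified $\delta(\varepsilon)$, so your bound is quantitatively weaker though still positive. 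In exchange, your argument is more robust: the paper's closedness step applies Proposition~\ref{prop:clustering-Hausdorff-cty} at limit points $\nu$ whose non-singularity is never verified (a gap as written, though patchable), whereas your separation argument only ever needs continuity at $\mu$, where the hypothesis $k\le\#\supp(\mu)$ guarantees it; and the concern you flagged about deterministic versus almost-sure sequences is resolved precisely because Proposition~\ref{prop:clustering-Hausdorff-cty} is a purely deterministic statement.
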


While this result is an interesting starting point  for understanding concentration of measure for clustering procedures, there are many natural questions which remain to be answered; we address some of these at the end of Subsection~\ref{subse:results-LDP}.

\subsection{Related literature}\label{subsec:related-lit}

In this subsection we briefly review some research, both classical and modern, which is related to the results in the present paper.

Among all clustering procedures, $k$-means has certainly received the most attention.
A typical reference for the initial theoretical study of $k$-means is usually taken to be \cite{MacQueen}, but it is known \cite{Bock} that the procedure has a rich prehistory under many different names.
The initial theoretical justification for $k$-means (as well as $k$-medians) is Pollard \cite{Pollard} in which a form of strong consistency was first shown.

In most of the recent lines of research on $k$-means clustering, the goal has been to establish consistency in the form of convergence of the empirical objectives to the optimal objective.
In this setting there now exist various forms of non-asymptotic bounds on the excess objective, often in the infinite-dimensional setting of separable Hilbert spaces or separable reflexive Banach spaces \cite{Laloe, BiauDevroyeLugosi, Fefferman, Zhivotovskiy, Parna3}.

A more complicated notion of consistency (which is the focus of our main results) is the convergence of the empirical minimizers to the population minimizers.
Initial results of this form often relied on the assumption that the population distribution admits a unique set of $k$-means cluster centers \cite{Pollard, AbayaWise, CuestaMatran}, but this condition can be very hard to verify (or in fact false) in practice.
Later results were able to dispense with the uniqueness condition \cite{Lember, Parna1, Parna2, Parna3, Thorpe}, sometimes at the expense of proving a slightly weaker notion of convergence.
Similar consistency results are known for $k$-medoids under some assumptions \cite{JiangAriasCastro}.
In context, our result recovers all of the consistency results stated above; in fact, it extends such results in both the geometric and adaptive directions, as we now describe.

On the geometric side, our results apply to variants of $k$-means clustering where the data may live in a very general metric space.
For example, it recovers known results in Heine-Borel metric spaces \cite{Parna1, Parna2} including Riemannian manifolds, and it recovers known results in reflexive Banach spaces \cite{Thorpe, Parna3} and uniformly convex Banach spaces \cite{Lember}.
Importantly, our results also provide new consistency results for $k$-means clustering in the Wasserstein space; while there is an emerging literature studying various theoretical, applied, and computational aspects of $k$-means clustering in the Wasserstein space \cite{delBarrioClustering,PapayiannisClustering, ZhuangWasserstein, VerdinelliWasserman}, our results provide, to the best of our knowledge, a novel asymptotic theory. 
In fact, our results apply to all metric spaces admitting a weak convergence, and many easy-to-verify sufficient conditions for this have been given in author's concurrent work \cite{JaffeInfDim}; we believe this may be of interest in future $k$-means clustering problems.
For example, one can use the results of \cite[Section~2]{JaffeInfDim} to see that the present results also apply to $k$-means clustering in the rotationally-invariant Wasserstein space which has been studied in cryo-EM \cite{WassersteinTomography}.

On the adaptive side, our results apply to variants of $k$-means clustering where parameters may be chosen to depend on the data.
For example, it recovers the main result of \cite{JiangAriasCastro} on consistency for $k$-medoids, and strengthens it by dispensing with the conditions therein regarding local compactness of $(\mathcal{X},d)$ and the bounded support of $\mu$.
Furthermore, our results provide new asymptotic theory for problems in which $k$ is chosen adaptively, for example according to the elbow method; while this method is mentioned in nearly every introductory machine learning textook (see, for example, \cite[Section~14.3.11]{ESL}, \cite[p. 247-248]{GeronML}, or \cite[Section 7.9]{Alpaydin}) we are not aware of any rigorous guarantees that such adaptive choices of $k$ lead to consistent clustering procedures.
We believe that it would be interesting to dedicate future work to studying strong consistency under alternative adaptive choices of $k$, for example the silhouette statistic or the gap statistic \cite{gap}.

Of course, the most significant contribution of this work is that our ``continuity''-based method of proof for these strong consistency results is very robust.
For example, we easily prove Theorem~\ref{thm:MC-consistency} and Theorem~\ref{thm:LDP}, which cannot be easily proved by the standard approaches used by previous authors.
The work of P\"{a}rna \cite{Parna1,Parna2, Parna3} contains traces of this method, since therein it is also observed that, from the point of view of asymptotic theory, the IID structure of the data is primarily useful insofar as it provides almost sure weak convergence of the empirical measures to the population distribution.
The author's own work on Fr\'echet means \cite{EvansJaffeFrechet, JaffeInfDim} is another precedent for this approach, although the present results generalize these significantly.

\medskip

The remainder of the paper is structured as follows.
In Section~\ref{sec:prelim} we review some preliminary topological concepts related to metric spaces, spaces of subsets of a given metric space, and spaces of probability measures on a given metric space.
In Section~\ref{sec:cluster} we study the clustering map itself, and we show that it is, in several suitable senses, ``continuous''.
Finally, we have Section~\ref{sec:results} in which we prove our main probabilistic results.

\section{Preliminaries}\label{sec:prelim}

In this section we develop the basic results which will combine in the next sections to prove various limit theorems for a wide class of clustering procedures.
Unless otherwise stated, we always assume that $(\mathcal{X},d)$ is a metric space and that $p\ge 1$.
By a slight abuse of notation, we write $d$ for both the metric on $\mathcal{X}$ and also for the topology on $\mathcal{X}$ generated by this metric.
We also note that we have
\begin{equation}\label{eqn:dist-comparison}
d^p(x',y) \le 2^{p-1}(d^p(x',x'')+d^p(x'',y))
\end{equation}
for all $x',x'',y\in \mathcal{X}$, which can be easily seen from the convexity of the map $t\mapsto t^{p}$.

\subsection{Spaces of subsets}\label{subsec:subsets}

For a Hausdorff topology $\tau$ on $\mathcal{X}$, we write $\closed^{\tau}(\mathcal{X})$ for the collection of all $\tau$-closed subsets of $\mathcal{X}$, and we also write $C(\mathcal{X}):=C^d(\mathcal{X})$ for the collection of all $d$-closed subsets of $X$.
Additionally, we write $\cbdd(\mathcal{X}):=\cbdd^d(\mathcal{X})$ for the collection of all non-empty $d$-compact subsets of $\mathcal{X}$.
In this subsection we introduce some notions of convergence for such spaces of subsets and establish some basic properties that we will later need.
We direct the interested reader to \cite[Chapter~5]{Beer} for further information on set-valued analysis.

To begin we introduce and review some basic properties of ``Kuratowski convergence'', sometimes called the ``Kuratowski-Painlev\'e convergence''.
For $\{C_n\}_{n\in\N}$ arbitrary subsets of $X$ and for $\tau$ any Hausdorff topology on $\mathcal{X}$, we define the sets
\begin{align*}
\underset{n\to\infty}{\kurouter^{\tau}}C_n &:= \left\{x\in \mathcal{X}:{\begin{matrix}{\mbox{for all $\tau$-open neighborhoods }}U{\mbox{ of }}x,\\ U\cap C_{n}\neq \emptyset \mbox{ for  infinitely many } n\in\N\end{matrix}}\right\} \\
\underset{n\to\infty}{\kurinner^{\tau}}C_n &:= \left\{x\in \mathcal{X}:{\begin{matrix}{\mbox{for all $\tau$-open neighborhoods }}U{\mbox{ of }}x,\\U\cap C_{n}\neq \emptyset {\mbox{ for large enough }}n \in \N\end{matrix}}\right\}.
\end{align*}
called the \textit{$\tau$-Kuratowski upper limit} and \textit{$\tau$-Kuratowski lower limit}, respectively.
More concretely, for a point $x\in \mathcal{X}$, we have $x\in \kurouter^{\tau}_{n\in\N}C_n$ if and only if there exists some subsequence $\{n_j\}_{j\in\N}$ and some $x_j\in C_{n_j}$ for each $j\in\N$ such that $x_{j}\to x$ in $\tau$, and we have $x\in \kurinner^{\tau}_{n\in\N}C_n$ if and only if for any subsequence $\{n_j\}_{j\in\N}$ there exists a further subsequence $\{j_i\}_{i\in\N}$ and some $x_i\in C_{n_{j_i}}$ for each $i\in\N$ with $x_i\to x$ in $\tau$.

If $\{C_n\}_{n\in\N}$ and $C$ are such that $\kurouter^{\tau}_{n\in\N}C_n\subseteq C$, then we say that $\{C_n\}_{n\in\N}$ \textit{converges in the $\tau$-Kuratowski upper sense to} $C$, and if they are such that $\kurinner^{\tau}_{n\in\N}C_n\supseteq C$, then we say that $\{C_n\}_{n\in\N}$ \textit{converges in the $\tau$-Kuratowski lower sense to} $C$.
Note the inclusions rather than exact equality here.
If $\{C_n\}_{n\in\N}$ satisfies $\kurouter^{\tau}_{n\in\N}C_n=\kurinner^{\tau}_{n\in\N}C_n = C$, then we write $\kurlimit^{\tau}_{n\in\N}C_n$ for their common value, called the \textit{$\tau$-Kuratowski (full) limit}, and in this case we say that $\{C_n\}_{n\in\N}$ \textit{converges in the $\tau$-Kuratowski (full) sense to} $C$ or that \textit{the $\tau$-Kuratowski limit exists} and equals $C$.

It is known that convergence in the $\tau$-Kuratowski full sense coincides with convergence in a topology on $\closed^{\tau}(\mathcal{X})$ if and only if $(\mathcal{X},\tau)$ is locally compact, in which case the topology is exactly the Fell topology (see \cite[Theorem~5.2.6]{Beer} and the remarks thereafter).
Contrarily, we do not know when convergence in the Kuratowski upper and lower senses correspond to convergence in suitable topologies.
Our later results will establish that certain functions map Kuratowski-convergent sequences to Kuratowski-convergent sequences, but, because of this issue of possible non-topologizability, we will not attempt to say that such functions are continuous;
we will, however, occasionally say that such functions are ``continuous'' in the exposition, where the quotation marks are meant to suggest that the term is being used non-rigorously for intuition only.
It is possible, in the locally compact setting, that these  ``continuity'' statements become bona fide continuity statements, but we will not concern ourselves with such technicalities in this work.

Next, we introduce and review some basic properties of ``Hausdorff  convergence'', and we note that a more comprehensive account of this theory can be found in \cite[Section~7.3]{BBI}.
That is, for $x\in \mathcal{X}$ and $C'\in \cbdd(\mathcal{X})$, write
\begin{equation*}
d(x,C') := \min_{x'\in C'}d(x,x')
\end{equation*}
for the shortest distance from the point $x$ to the set $C'$.
Observe that $C'$ being non-empty and $d$-compact implies that $d(x,C') < \infty$ and that the infimum is achieved.
Now for $C,C'\in\cbdd(X)$, write
\begin{equation*}
\dvecHaus(C,C') := \max_{x\in C}d(x,C') = \max_{x\in C}\min_{x'\in C'}d(x,x')
\end{equation*}
for the largest possible shortest distance from a point in $C$ to the set $C'$.
Observe that $C$ and $C'$ being non-empty and $d$-compact imply that $\dvecHaus(C,C') < \infty$ and that the supremum and infimum are both achieved.
Although $\dvecHaus$ is not a metric (it is not symmetric), it it easy to show that it satisfies the following type of triangle inequality
\begin{equation}\label{dvecHaus-triangle-ineq}
\dvecHaus(C,C'') \le \dvecHaus(C,C') + \dvecHaus(C',C'')
\end{equation}
for $C,C',C''\in \cbdd(\mathcal{X})$.

If $\{C_n\}_{n\in\N}$ and $C$ in $\cbdd(X)$ are such that $\dvecHaus(C_n,C)\to 0$, then we say that $\{C_n\}_{n\in\N}$ \textit{converges in the $d$-Hausdorff upper sense to} $C$, and if they are such that $\dvecHaus(C,C_n)\to 0$, then we say that $\{C_n\}_{n\in\N}$ \textit{converges in the $d$-Hausdorff lower sense to} $C$.
As before, we will not discuss these as topological notions (although we observe, in contrast to the Kuratowski convergences, that the Hausdorff convergences are always topologizable \cite[Lemma~3.12]{JaffeInfDim}).

\begin{remark}\label{rem:dvecHaus-finite}
	The definition of $\dvecHaus$ immediately extends from to the case of $C,C'\in \cbdd(\mathcal{X})$ to the case of non-empty $C,C'\in \closed(\mathcal{X})$, provided that we replace the maximum and minimum with supremum and infimum and that we allow it to take values in the extended real half-line, $[0,\infty]$.
	So, if $\{C_n\}_{n\in\N}$ and $C$ in $\closed(\mathcal{X})$ are assumed only to be non-empty then the expression $\dvecHaus(C_n,C) \to 0$ is taken to mean that $\dvecHaus(C_n,C)$ is finite for sufficiently large $n\in\N$ and that it converges to zero as $n\to\infty$.
\end{remark}

Now, for $C,C'\in \cbdd(\mathcal{X})$, write
\begin{equation*}
\dHaus(C,C') := \max\{\dvecHaus(C,C'),\dvecHaus(C',C)\}.
\end{equation*}
This is a well-studied object called the \textit{Hausdorff metric}, and it is, as the name suggests, a bona fide metric on $\cbdd(\mathcal{X})$.
Write $\hausdorfftopo$ for the topology on $\cbdd(\mathcal{X})$ such that $\{C_n\}_{n\in\N}$ and $C$ in $\cbdd(\mathcal{X})$ have $\lim_{n\to\infty}C_n = C$ in $\hausdorfftopo$ if and only if $\lim_{n\to\infty}\dHaus(C_n,C)  = 0$; this is called the \textit{Hausdorff topology}.
Equivalently, $\hausdorfftopo$ is the weakest topology on $\cbdd(\mathcal{X})$ such that $\dvecHaus(C,\cdot),\dvecHaus(\cdot,C):\cbdd(\mathcal{X},d)\to [0,\infty)$ are continuous for all $C\in \cbdd(\mathcal{X})$.

Next we give some simple but important results related to the aforementioned concepts.

\begin{lemma}\label{lem:Kur-lower-subseq}
	If $\{R_n\}_{n\in\N}$ in $\closed(\mathcal{X})$ and $C$ in $\cbdd(\mathcal{X})$ have $C\subseteq \kurinner^{d}_{n\in\N}R_n$ and $\#C<\infty$, then, there exists a subsequence $\{n_j\}_{j\in\N}$ and $C_j\subseteq R_{n_j}$ with $\#C_{j} = \#C$ for all $j\in\N$ such that we have $\dHaus(C_{j}, C)\to 0$.
\end{lemma}

\begin{proof}
Write $C = \{x_1,\ldots, x_k\}$ for $k := \#C$, and set $\varepsilon:= \min\{d(x_{\ell},x_{\ell'}): 1\le \ell<\ell'\le k\}> 0$.
Then apply $C\subseteq \kurinner^{d}_{n\in\N}R_n$ iteratively $k$ times to get a subsequence $\{n_j\}_{j\in\N}$ and sets $C_j := \{x_{1,j},\ldots, x_{k,j}\}\subseteq R_{n_j}$ for each $j\in\N$ such that we have $x_{\ell,j}\to x_{\ell}$ in $d$ as $j\to\infty$ for all $1\le \ell \le k$.

Next, observe that we have $\max_{1\le \ell\le k}d(x_{\ell,j},x_\ell) < \varepsilon/3$ for sufficiently large $j\in\N$.
Also by the triangle inequality, we have $\varepsilon \le d(x_{\ell},x_{\ell'})\le d(x_{\ell},x_{\ell,j}) + d(x_{\ell,j},x_{\ell',j})+d(x_{\ell',j},x_{\ell'})$ for all $1\le \ell<\ell'\le k$.
It follows that, for sufficiently large $j\in\N$ and all $1\le \ell<\ell'\le k$, we have $0 < \frac{\varepsilon}{3} \le d(x_{\ell,j},x_{\ell',j})$.
This implies that $\#C_j = k$ for sufficiently large $j\in\N$.

By construction we have $\dvecHaus(C,C_j)\to 0$, so it only remains to show $C_j\to C$ in $\dHaus$ as $j\to\infty$.
To do this, note that for sufficiently large $j\in\N$ we have $\dvecHaus(C,C_j) < \varepsilon/2$ hence we can construct a map $\phi_j:C\to C_j$ by sending each $x\in C$ to some $\phi_j(x)\in C_j$ such that $d(x,\phi_j(x)) < \varepsilon/2$.
Of course, if $\phi_j(x) = \phi_j(x')$ for $x,x'\in C$ then we have $d(x,x')\le d(x,\phi_j(x)) + d(x',\phi_j(x')) < \varepsilon$, hence $x=x'$ by the minimality of $\varepsilon$, hence $\phi_j:C\to C_j$ is injective.
Since an injective map between finite sets of the same cardinality is automatically a bijection, there exists, for sufficiently large $j\in\N$, a well-defined function $\phi_j^{-1}:C_j\to C$ such that for all $x_j\in C_j$ we have $d(x_j,\phi_j^{-1}(x_j)) < \varepsilon/2$.
This means we have $\dvecHaus(C_j,C) < \varepsilon/2$ for sufficiently large $j\ge N$, as needed.
\end{proof}

\begin{lemma}\label{lem:joint-cty}
	If $\{y_n\}_{n\in\N}$ and $y$ in $\mathcal{X}$ have $y_n\to y$ and $\{S_n\}_{n\in\N}$ and $S$ in $\cbdd(\mathcal{X})$ have $S_n\to S$ in $\dHaus$, then $d(y_n,S_n)\to d(y,S)$.
\end{lemma}

\begin{proof}
	By \eqref{dvecHaus-triangle-ineq}, we have
	\begin{equation*}
		d(y_n,S_n) \le d(y_n,S) + \dvecHaus(S,S_n)
	\end{equation*}
	and
	\begin{equation*}
		d(y_n,S) \le d(y_n,S_n) + \dvecHaus(S_n,S).
	\end{equation*}
	Thus, the result follows from the well-known continuity of the map $d(\,\cdot\,,S):\mathcal{X}\to\R$.
\end{proof}

Another definition we will need is that of a \textit{random set}, by which we mean a measurable map $S:(\Omega,\F)\to (\closed(\mathcal{X}),\mathcal{E}(\mathcal{X}))$ where $\closed(\mathcal{X})$ is the collection of all $d$-closed subsets of $X$ and where $\mathcal{E}(\mathcal{X})$ is \textit{Effros $\sigma$-algebra}, that is, the $\sigma$-algebra on $\closed(\mathcal{X})$ generated by $\{\{C\in \closed(\mathcal{X}): C\cap B_{r}(x) = \emptyset\}: x\in \mathcal{X}, r>0\}$; here $B_{r}(x):= \{y\in \mathcal{X}: d(x,y)<r\}$ denotes the open ball of radius $r$ around $x$.
By a \textit{random $\tau$-closed set} we mean a random set $S$ satisfying $\P(S \textnormal{ is $\tau$-closed}) = 1$.

Lastly, we introduce a bit of notation.
Suppose that $\{S_n\}_{n\in\N}$ and $S$ are finite subsets of $\mathcal{X}$ with $\#S_n = \#S =: k$ for sufficiently large $n\in\N$.
Then, we write $S_n\to S$ in $\tau$ to mean that there exist labelings $S_n = \{a^{n}_{1},\ldots, a^{n}_{k}\}$ for sufficiently large $n\in\N$ and $S = \{a_1,\ldots, a_k\}$ such that we have $a_{\ell}^{n}\to a_{\ell}$ in $\tau$ for all $1\le \ell \le k$.
Similarly, we write $S_n\to S$ in $d$ to mean that there exist labelings as above such that we have $a_{\ell}^{n}\to a_{\ell}$ in $d$ for all $1\le \ell \le k$.
Note also that for finite sets $\{S_n\}_{n\in\N}$ and $S$ of the same cardinality, we have that $\dHaus(S_n,S)\to 0$ is equivalent to $S_n\to S$ in $d$.

\subsection{Spaces of measures}

Write $\mathcal{P}(\mathcal{X})$ to denote the space of Borel probability measures on $\mathcal{X}$, with respect to the topology generated by $d$.
We write $\weak$ for the topology on $\mathcal{P}(\mathcal{X})$ such that $\{\mu_n\}_{n\in\N}$ and $\mu$ in $\mathcal{P}(\mathcal{X})$ have $\mu_n\to \mu$ in $\weak$ if and only if we have $\int_{\mathcal{X}}f\, \diff\mu_n\to \int_{\mathcal{X}}f \, \diff\mu$ for all bounded, $d$-continuous functions $f:\mathcal{X}\to \R$; this is nothing more than the weak topology corresponding to the topology generated by $d$.
Then write $\mathcal{P}_p(\mathcal{X})\subseteq \mathcal{P}(\mathcal{X})$ for the subspace of all Borel probability measures which satisfy $\int_{\mathcal{X}}d^p(x,y)\, \diff\mu(y) < \infty$ for some $x\in X$.
By \eqref{eqn:dist-comparison}, we have $\mu\in\mathcal{P}_p(\mathcal{X})$ if and only if $\int_{\mathcal{X}}d^p(x,y)\, \diff\mu(y) < \infty$ for all $x\in \mathcal{X}$.

Next define the function $W_p:\cbdd(\mathcal{X})\times \mathcal{P}_p(\mathcal{X})\to [0,\infty)$ via
\begin{equation*}
W_p(S,\mu) := \int_{\mathcal{X}}d^p(y,S)\, \diff\mu(y) = \int_{\mathcal{X}}\min_{x\in S}d^p(x,y)\, \diff\mu(y),
\end{equation*}
for $S\in\cbdd(\mathcal{X})$ and $\mu\in\mathcal{P}_p(\mathcal{X})$.

Now define a topology $\tau_{w}^{p}$ on $\mathcal{P}_p(\mathcal{X})$ such that $\{\mu_n\}_{n\in\N}$ and $\mu$ in $\mathcal{P}_p(\mathcal{X})$ have $\mu_n \to \mu$ in $\tau_{w}^{p}$ if and only if we have $\mu_n\to\mu$ in $\weak$ and $\int_{\mathcal{X}}d^p(x,y)\, \diff\mu_n(y) \to \int_{\mathcal{X}}d^p(x,y)\, \diff\mu(y)$ as $n\to\infty$ for some $x\in \mathcal{X}$; in fact, it is known \cite[Lemma~2.1]{EvansJaffeFrechet} that this is equivalent to $\mu_n\to\mu$ in $\weak$ and $\int_{\mathcal{X}}d^p(x,y)\, \diff\mu_n(y) \to \int_{\mathcal{X}}d^p(x,y)\, \diff\mu(y)$ as $n\to\infty$ for all $x\in \mathcal{X}$,
We refer to $\tau_{w}^{p}$ as the \textit{$p$-Wasserstein topology} and we note that it is is closely related to many aspects of optimal transport \cite{Villani}.

Next we develop a few fundamental approximation results, whose proofs follow a common theme:
First, we use Skorokhod's representation theorem \cite[Theorem~4.30]{Kallenberg} to represent convergence in $\weak$ as almost sure convergence on a suitable probability space.
Second, we use the fact \cite[Lemma~5.11]{Kallenberg} that random variables converging almost surely have converging expectations if and only if they are uniformly integrable.
Lastly, we use the inequality \eqref{eqn:dist-comparison} to provide some dominations which allow us to transfer uniform integrability across different random variables.

\begin{lemma}\label{lem:Skor}
	Suppose that $\{\mu_n\}_{n\in\N}$ and $\mu$ in $\mathcal{P}_p(\mathcal{X})$ have $\mu_n\to \mu$ in $\weakp$.
	Then, there exists a probability space $(\Omega,\F,\P)$, with expectation $\E$, on which are defined random variables $\{Y^n\}_{n\in\N}$ and $Y$ with laws $\{\mu_{n}\}_{n\in\N}$ and $\mu$, respectively, such that we have $Y^n\to Y$ almost surely and $\E[d^p(Y^n,Y)]\to 0$ as $n\to\infty$.
\end{lemma}

\begin{proof}
	By the standard Skorokhod theorem, we get a sequence of random variables with desired laws and with the desired almost sure convergence property, so it only remains to show that this coupling satisfies $\E[d^p(Y^n,Y)]\to 0$ as $n\to\infty$.
	Indeed, we have $d^p(Y^n,Y)\to 0$ almost surely as $n\to\infty$, so it suffices to show that the family $\{d^p(Y^n,Y)\}_{n\in\N}$ is uniformly integrable.
	To do this, fix arbitrary $x\in X$ and use \eqref{eqn:dist-comparison} to bound:
	\begin{equation*}
		d^p(Y^n,Y)  \le 2^{p-1}\left(d^p(x,Y^n)+d^p(x,Y)\right).
	\end{equation*}
	In particular, it further suffices to show that $\{d^p(x,Y^n)\}_{n\in\N}$ is uniformly integrable.
	Since $d^p(x,Y^n)\to d^p(x,Y)$ almost surely, this is equivalent to $\E[d^p(x,Y^n)]\to \E[d^p(x,Y)]$ which follows from the definition of convergence in $\weakp$.
	This finishes the proof.
\end{proof}

\begin{lemma}\label{lem:W-cts}
	The function $W_p:(\cbdd(\mathcal{X})\times \mathcal{P}_p(\mathcal{X}),\dHaus\times\tau_{w}^{p})\to [0,\infty)$ is continuous.
\end{lemma}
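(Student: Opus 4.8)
The plan is to verify the continuity directly via a sandwiching argument built on the integrated Peter--Paul inequality \eqref{eqn:Peter-Paul-integrated}, reducing to the behavior along convergent sequences. So I would fix $S\in\cpt(X)$ and $\mu\in\mathcal{P}_p(X)$, take arbitrary sequences $S_n\to S$ in $\dHaus$ and $\mu_n\to\mu$ in $\weakp$, and aim to show $W_p(S_n,\mu_n)\to W_p(S,\mu)$. The key observation at the outset is that $\dHaus(S_n,S)\to 0$ forces \emph{both} $\dvecHaus(S_n,S)\to 0$ and $\dvecHaus(S,S_n)\to 0$, since $\dHaus$ is the maximum of the two directed distances; this is what will let me move the set argument of $W_p$ in either direction.

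For the upper bound I would apply \eqref{eqn:Peter-Paul-integrated} against the measure $\mu_n$ in the form
\begin{equation*}
W_p(S_n,\mu_n) \le c_{p,\varepsilon}(\dvecHaus(S_n,S))^p + (1+\varepsilon)W_p(S,\mu_n).
\end{equation*}
As $n\to\infty$ the first term vanishes because $\dvecHaus(S_n,S)\to 0$, while $W_p(S,\mu_n)\to W_p(S,\mu)$ by Lemma~\ref{lem:weakp-sets-converge} applied to the fixed compact set $S$. Hence $\limsup_{n\to\infty}W_p(S_n,\mu_n)\le (1+\varepsilon)W_p(S,\mu)$, and letting the arbitrary $\varepsilon>0$ tend to $0$ gives $\limsup_{n\to\infty}W_p(S_n,\mu_n)\le W_p(S,\mu)$.

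For the matching lower bound I would run the same estimate with the roles of $S$ and $S_n$ interchanged, namely
\begin{equation*}
W_p(S,\mu_n) \le c_{p,\varepsilon}(\dvecHaus(S,S_n))^p + (1+\varepsilon)W_p(S_n,\mu_n).
\end{equation*}
Here the left-hand side tends to $W_p(S,\mu)$ (again by Lemma~\ref{lem:weakp-sets-converge}) and $\dvecHaus(S,S_n)\to 0$, so $W_p(S,\mu)\le (1+\varepsilon)\liminf_{n\to\infty}W_p(S_n,\mu_n)$; letting $\varepsilon\to 0$ yields $W_p(S,\mu)\le\liminf_{n\to\infty}W_p(S_n,\mu_n)$. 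Combining the two bounds gives $\lim_{n\to\infty}W_p(S_n,\mu_n)=W_p(S,\mu)$.

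The substantive mechanism here is the decoupling of the two arguments of $W_p$: rather than trying to estimate $|W_p(S_n,\mu_n)-W_p(S,\mu)|$ head-on, I absorb the change of set into a $\dvecHaus$-term at the cost of a multiplicative $(1+\varepsilon)$ factor, which is exactly what makes the proof short and is why no continuity of $\dvecHaus$ beyond $\dvecHaus(S_n,S)\to 0$ is required. The only genuinely delicate point I anticipate is the passage from this sequential statement to honest continuity, since it is not a priori clear that $\dHaus\times\weakp$ is first-countable. I would handle this either by restricting to the separable setting, where $\weakp$ is metrized by the $p$-Wasserstein distance and sequential continuity is continuity, or by noting that every step above is phrased through the eventual bounds of \eqref{eqn:Peter-Paul-integrated} and Lemma~\ref{lem:weakp-sets-converge} and so transfers verbatim to nets.
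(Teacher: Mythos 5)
Your proposal is correct and is essentially the paper's own proof: both use the integrated Peter--Paul inequality \eqref{eqn:Peter-Paul-integrated} in the two directions (absorbing $\dvecHaus(S_n,S)$ for the $\limsup$ bound and $\dvecHaus(S,S_n)$ for the $\liminf$ bound), invoke Lemma~\ref{lem:weakp-sets-converge} for the fixed set $S$, and conclude by letting $\varepsilon \downarrow 0$. Your closing remark about sequential versus topological continuity is a reasonable extra precaution, but since the paper defines $\weakp$ directly through convergence of sequences, the sequential argument is exactly what the statement requires.
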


\begin{proof}
	Suppose $\{(S_n,\mu_n)\}_{n\in\N}$ and $(S,\mu)$ in $\cbdd(\mathcal{X})\times \mathcal{P}_p(\mathcal{X})$ have $(S_n,\mu_n)\to (S,\mu)$ in $\dHaus\times \weakp$, and let $(\Omega,\F,\P)$ be as in Lemma~\ref{lem:Skor}, and write $\E$ for the corresponding expectation operator.
	Then Lemma~\ref{lem:joint-cty} implies $d(Y^n,S_n)\to d(Y,S)$ almost surely.
	Now fix $x\in S$ and note that $\{d(x,Y^n)\}_{n\in\N}$ is uniformly integrable by the definition of $\weakp$.
	By \eqref{eqn:dist-comparison} we have
	\begin{equation*}
		d^p(Y^n,S_n) \le 2^{p-1}\left(d^p(x,Y^n) + \left(\dvecHaus(S,S_n)\right)^p\right),
	\end{equation*}
	so it follows that $\{d^p(Y^n,S_n)\}_{n\in\N}$ is uniformly integrable.
	Consequently, we have
	\begin{equation*}
		W_p(S_n,\mu_n) = \E\left[d^p(Y^n,S_n)\right] \to  \E\left[d^p(Y,S)\right] = W_p(S,\mu)
	\end{equation*}
	as claimed.
\end{proof}

\begin{lemma}\label{lem:partial-cty}
	If $\{S_n\}_{n\in\N}$ in $\cbdd(\mathcal{X})$ and $\{\mu_n\}_{n\in\N}$ and $\mu$ in $\mathcal{P}_p(\mathcal{X})$ have $\mu_n\to \mu$ in $\weakp$, then
	\begin{equation*}
		\liminf_{n\to\infty}W_p(S_n,\mu_n) = \liminf_{n\to\infty}W_p(S_n,\mu)
	\end{equation*}
	and
	\begin{equation*}
		\limsup_{n\to\infty}W_p(S_n,\mu_n) = \limsup_{n\to\infty}W_p(S_n,\mu).
	\end{equation*}
\end{lemma}

\begin{proof}
	Recall \cite[Lemma~2.3]{EvansJaffeFrechet} that for all $\varepsilon>0$ there exists $c_{p,\varepsilon}>0$ such that we have
	\begin{equation}\label{eqn:Peter-Paul}
		d^p(x,x'') \le c_{p,\varepsilon}d^p(x,x') + (1+\varepsilon)d^p(x',x'')
	\end{equation}
	for all $x,x',x''\in \mathcal{X}$ hence	
	\begin{equation}\label{eqn:Peter-Paul-sets}
		\left(\dvecHaus(C,C'')\right)^p \le c_{p,\varepsilon}\left(\dvecHaus(x,x')\right)^p + (1+\varepsilon)\left(\dvecHaus(x',x'')\right)^p
	\end{equation}
	for all $C,C',C''\in\cbdd(\mathcal{X})$.
	Also get a probability space $(\Omega,\F,\P)$ with expectation $\E$ and random variables $\{Y^n\}_{n\in\N}$ and $Y$ as in Lemma~\ref{lem:Skor}.
	Then, for arbitrary $\varepsilon>0$, use \eqref{eqn:Peter-Paul-sets} to get:
	\begin{align*}
		W_p(S_n,\mu_n) &= \E\left[d^p(Y^n,S_n)\right] \\
		&\le c_{p,\varepsilon}\E\left[d^p(Y^n,Y)\right] + (1+\varepsilon)\E\left[d^p(Y,S_n)\right] \\
		&= c_{p,\varepsilon}\E\left[d^p(Y^n,Y)\right] + (1+\varepsilon)W_p(S_n,\mu).
	\end{align*}
	Then take $n\to\infty$ and $\varepsilon\to 0$ to get
	\begin{equation}\label{eqn:Skor-1}
		\liminf_{n\to\infty}W_p(S_n,\mu_n) \le \liminf_{n\to\infty}W_p(S_n,\mu).
	\end{equation}
	Similarly, we can bound, for any $\varepsilon>0$:
	\begin{align*}
		W_p(S_n,\mu) &= \E\left[d^p(Y,S_n)\right] \\
		&\le c_{p,\varepsilon}\E\left[d^p(Y,Y^n)\right] + (1+\varepsilon)\E\left[d^p(Y^n,S_n)\right] \\
		&= c_{p,\varepsilon}\E\left[d^p(Y,Y^n)\right] + (1+\varepsilon)W_p(S_n,\mu_n).
	\end{align*}
	Thus,
	\begin{equation}\label{eqn:Skor-2}
		\liminf_{n\to\infty}W_p(S_n,\mu) \le \liminf_{n\to\infty}W_p(S_n,\mu_n).
	\end{equation}
	Combining \eqref{eqn:Skor-1} and \eqref{eqn:Skor-2} gives the desired $\liminf$ equality, and the $\limsup$ equality is proved in the same way.

\end{proof}

\section{Analysis of the clustering map}\label{sec:cluster}

In this section, we show that various natural maps used in the construction of clustering procedures are ``continuous'' with respect to various topologies of spaces of measures, space of sets, and on spaces of sets of sets.
In particular, we show Proposition~\ref{prop:cpt-operator} and Proposition~\ref{prop:clustering-cty} which provide ``continuity'' of the clustering map, and we state Proposition~\ref{prop:elbow-k-cts} which provides sufficient conditions for the continuity of the adaptive choice of $k$ arising in the elbow-method.
Throughout this section, we fix $(\mathcal{X},d)$ a metric space and $p\ge 1$ an arbitrary exponent.

To begin, we give the basic notions of the clustering procedures of interest.

\begin{definition}
	For $\mu\in\mathcal{P}_p(\mathcal{X})$, $k\in\N$, and $R\in\closed(\mathcal{X})$, set
	\begin{equation}\label{eqn:m-def}
	m_{k,p}(\mu,R) := \inf_{\substack{S'\subseteq R \\ 1\le\#S'\le k}}W_p(S',\mu),
	\end{equation}
	and also set $C_{p}(\mu,k,R)$ to be the set of all $S\subseteq R$ with $1\le\#S\le k$ satisfying
	\begin{equation}\label{eqn:cluster-def}
	W_p(S,\mu)\le m_{k,p}(\mu,R).
	\end{equation}
	If a set $S\subseteq \mathcal{X}$ has $S\subseteq R$ and $1\le\#S\le k$ it is called \textit{feasible} and if it achieves \eqref{eqn:cluster-def} it is called \textit{optimal}.
	Note that $C_p(\mu,k,R)$ is empty if there are no optimal sets or if $R$ is empty.
	We refer to $C_{p}(\mu,k,R)$ as the \textit{set of sets of $R$-restricted $(k,p)$-means clustering centers}.
	For $\varepsilon\ge 0$, we also set $C_{p}(\mu,k,R;\varepsilon)$ to be the set of all $S\subseteq R$ with $1\le\#S\le k$ satisfying
	\begin{equation}
		W_p(S,\mu)\le m_{k,p}(\mu,R) + \varepsilon,
	\end{equation}
	called the \textit{set of sets of $\varepsilon$-relaxed $R$-restricted $(k,p)$-means clustering centers}.
\end{definition}

We also recall the assumptions (W1), (W2), and (W3) stated in the introduction of the paper.
The importance of assumptions (W1) and (W2) is that they lead to the following technical result, which forms the basis for much of our later work.

\begin{proposition}\label{prop:structural}
	Suppose that $\tau$ satisfies \textnormal{(W1)} and \textnormal{(W2)}, that $\{R_n\}_{n\in\N}$ and $R$ in $\closed^{\tau}(\mathcal{X})$ satisfy $\kurouter_{n\in\N}^{\tau}R_n\subseteq R$, and that $\{S_n\}_{n\in\N}$ satisfy $S_n\subseteq R_n$ and $\#S_n= k$ for all $n\in\N$.
	Then, for any labeling $S_n = \{a_1^{n},\ldots, a_{k}^{n}\}$ for all $n\in\N$, there exists a subsequence $\{n_j\}_{j\in\N}$ such that, for each $1\le \ell \le k$, exactly one of
	\begin{itemize}
		\item $\{a_{\ell}^{n_j}\}_{j\in\N}$ is $d$-unbounded, or
		\item $\{a_{\ell}^{n_j}\}_{j\in\N}$ is $\tau$-convergent to some $a_{\ell}\in R$
	\end{itemize}
	holds.
	Consequently, the set $S:=\{a_{\ell}: 1\le \ell \le k, \{a_{\ell}^{n_j}\}_{j\in\N} \textnormal{ is $d$-bounded}\}$ satisfies $S\subseteq R$ and $\#S\le k$, and we have
	\begin{equation*}
		d^p(y,S)\le \liminf_{j\to\infty}d^p(y,S_{n_j})
	\end{equation*}
	for all $y\in \mathcal{X}$, hence
	\begin{equation*}
		W_p(S,\mu)\le \liminf_{j\to\infty}W_p(S_{n_j},\mu)
	\end{equation*}
	for all $\mu\in\mathcal{P}_p(\mathcal{X})$.
	If additionally $\#S=k$, then $S_{n_j}\to S$ in $\tau$.
\end{proposition}

\begin{proof}
	We construct $\{n_j\}_{j\in\N}$ by iteratively applying (W1):
	First, if $A_1:=\{a_1^n\}_{n\in\N}$ is $d$-bounded, we use (W1) to get $\{n_{1,j}\}_{j\in\N}$ and $a_1\in \mathcal{X}$ with $a_1^{n_{1,j}}\to a_1$ in $\tau$.
	Then recursively for $1< \ell \le k$, if $A_{\ell}:=\{a_{\ell}^{n_{\ell-1,j}}\}_{n\in\N}$ is $d$-bounded, we use (W1) to get $\{n_{\ell,j}\}_{j\in\N}$ and $a_{\ell}\in \mathcal{X}$ with $a_{\ell}^{n_{\ell,j}}\to a_{\ell}$ in $\tau$.
	Now we take $\{n_j\}_{j\in\N}:= \{n_{k,j}\}_{j\in\N}$, and we set
	\begin{equation*}
		S_j':=\{a_{\ell}^{n_j}: 1 \le \ell \le k, A_{\ell} \textnormal{ is $d$-bounded}\}
	\end{equation*}
	for $j\in\N$.
	Observe that we immediately have $S\subseteq R$ and $\#S\le k$.
	Moreover, if $\#S=k$ then it follows by definition that we have $S_{n_j}\to S$ in $\tau$.
	In any case, it follows that we have $S_j'\subseteq S_{n_j}$ for all $j\in\N$, as well as
	\begin{equation*}
		\liminf_{j\to\infty}d^p(y,S_{j}') = \liminf_{j\to\infty}d^p(y,S_{n_j}).
	\end{equation*}
	for all $y\in X$, since all elements of $S_{n_j}\setminus S_j'$ come from $d$-unbounded sequences as $j\to\infty$.

	To show the inequalities, we take arbitrary $y\in X$.
	Let $\{j_i\}_{i\in\N}$ be a subsequence satisfying
	\begin{equation*}
		\liminf_{j\to\infty}d^p(y,S_{j}') = \lim_{i\to\infty}d^p(y,S_{j_i}').
	\end{equation*}
	By the pigeonhole principle, there exists some $1\le \ell\le k$ and a further subsequence $\{i_u\}_{u\in\N}$ such that
	\begin{equation*}
		d^p(y,S_{j_{i_u}}') = d^p(a_{\ell}^{n_{j_{i_u}}},y)
	\end{equation*}
	for all $u\in\N$.
	Therefore, by (W2) and the construction, we get:
	\begin{align*}
		d^p(y, S) \le d^p(a_{\ell},y) \le \liminf_{u\to\infty}d^p(a_{\ell}^{n_{j_{i_u}}},y) &= \lim_{u\to\infty}d^p(y,S_{j_{i_u}}') \\
		&= \liminf_{j\to\infty}d^p(y,S_{j}') \\
		&= \liminf_{j\to\infty}d^p(y,S_{n_j}).
	\end{align*}
	Finally, we apply Fatou to get
	\begin{align*}
		W_p(S,\mu) &\le \int_{\mathcal{X}}\liminf_{j\to\infty}d^p(y,S_{n_j})\, \diff\mu(y) \\
		&\le \liminf_{j\to\infty}\int_{\mathcal{X}}d^p(y,S_{n_j})\, \diff\mu(y) \le \liminf_{j\to\infty}W_p(S_{n_j},\mu),
	\end{align*}
	as claimed.
\end{proof}

\begin{lemma}\label{lem:non-empty}
	If $\tau$ satisfies \textnormal{(W1)} and \textnormal{(W2)} and $(\mu,k,R)\in\mathcal{P}_p(\mathcal{X})\times\N\times \closed^{\tau}(\mathcal{X})$, then $C_p(k,\mu,R)$ is non-empty.
\end{lemma}

\begin{proof}
For each $n\in\N$, let $S_n$ be a set of $2^{-n}$-relaxed $R$-restricted $(k,p)$-means cluster centers.
That is, we have $S_n\subseteq R$ and $1\le \#S_n\le k$, as well as $W_p(S_n,\mu) \le m_{k,p}(\mu,R)+2^{-n}$ for all $n\in\N$.
We can also assume that $\#S_n = k$ by adding more points if necessary, since this cannot increase the objective.
Now get $\{n_j\}_{j\in\N}$ and $S\subseteq R$ as in Proposition~\ref{prop:structural}, and note that this implies
\begin{equation*}
	W_p(S,\mu) \le \liminf_{j\to\infty}W_p(S_{n_j},\mu) \le \liminf_{j\to\infty}(m_{k,p}(\mu,R)+2^{-n_j}) = m_{k,p}(\mu,R).
\end{equation*}
This shows $S\in C_p(\mu, k, R)$, so $C_p(\mu, k, R)$ is non-empty.
\end{proof}

The remainder of our results require an important notion of non-singularity, which has been introduced in \cite{Lember,Parna1,Parna2}.
To understand it, consider any $(\mu,k,R)\in \mathcal{P}_p(\mathcal{X})\times\N\times \closed(\mathcal{X})$.
Notice that the infimum in \eqref{eqn:m-def} can equivalently be taken over all $S'\subseteq R$ with $\#S'=k$, since adding points to a set of cluster centers can never increase its objective.
However, it is possible that a set of cluster centers $S'\subseteq R$ with $\#S'<k$ is already optimal.
The following notion excludes this possibility:

\begin{definition}
	We say that $(\mu,k,R)\in\mathcal{P}_p(\mathcal{X})\times \N\times \closed(\mathcal{X})$ is \textit{non-singular} if $m_{1,p}(\mu,R)>m_{2,p}(\mu,R)>\cdots >m_{k,p}(\mu,R)$ and \textit{singular} otherwise.
\end{definition}

Observe for $R=\emptyset$ that we have $m_{k,p}(\mu,\emptyset) = \infty$ for all $k\in\N$, so $(\mu,k,\emptyset)$ can never be non-singular.
In other words, $(\mu,k,R)$ being non-singular implies that $R$ is non-empty.
We also give the following simple sufficient condition for non-singularity:

\begin{lemma}\label{lem:non-singular}
	If $\tau$ satisfies \textnormal{(W1)} and \textnormal{(W2)} and $(\mu,k,R)\in\mathcal{P}_p(\mathcal{X})\times\N\times \closed^{\tau}(\mathcal{X})$ satisfies $k\le \#(R\cap\supp(\mu))$, then $(\mu,k,R)$ is non-singular.
\end{lemma}

\begin{proof}
Fix $1 < \ell\le k$, and use Lemma~\ref{lem:non-empty} to get $S\in C_p(\mu, \ell-1, R)$.
Then $\#S\le \ell-1 <k$ and $\#(R\cap \supp(\mu))\ge k$ together imply that there is some $z\in (R\cap \supp(\mu))\setminus S$.
In particular, the set $S\cup\{z\}$ satisfies $S\cup\{z\}\subseteq R$ and $1\le \#(S\cup\{z\})\le \ell$.
Thus, the proof is complete if we can show that we have $W_p(S\cup\{z\},\mu) < W_p(S,\mu)=m_{\ell-1,p}(\mu,R)$.
To do this, assume for the sake of contradiction that we have $W_p(S\cup\{z\},\mu) = W_p(S,\mu)$.
Since we of course have $d^p(y,S\cup\{z\})\le d^p(y,S)$ for all $y\in X$, we conclude that we must have $d^p(y,S\cup\{z\})= d^p(y,S)$ for all $\mu$-almost all $y\in \mathcal{X}$.
Thus, choosing $r>0$ small enough so that we have $d^p(y,S\cup\{z\}) = d^p(z,y)$ for all $y\in B_r(z)$, we find $\mu(B_r(z)) = 0$, and this contradicts $z\in \supp(\mu)$.
\end{proof}
	
One important consequence of non-singularity is that it guarantees the following uniform boundedness property; this will help us verify the hypotheses of Proposition~\ref{prop:structural} in our later work.
Its proof is just a slight strengthening of \cite{Pollard, Lember}, so we defer the details to the supplementary material.

\begin{proposition}\label{prop:bdd}
	Suppose that $\{(\mu_n,k_n,R_n)\}_{n\in\N}$ and $(\mu,k,R)$ in $\mathcal{P}_p(\mathcal{X})\times \N\times \closed(\mathcal{X})$ are such that $(\mu,k,R)$ is non-singular, and
	\begin{itemize}
		\item $\mu_n\to \mu$ in $\weakp$,
		\item $k_n\neq k$ for finitely many $n\in\N$, and
		\item $R\subseteq \kurinner^d_{n\in\N}R_n$.
	\end{itemize}
	Also suppose that non-negative constants $\{\varepsilon_n\}_{n\in\N}$ have $\varepsilon_n\to 0$.
	Then, there exists some $z\in \mathcal{X}$ and $r > 0$ such that for sufficiently large $n\in\N$ and all $S_n\in C_p(\mu_n,k_n,R_n;\varepsilon_n)$, we have $\#S_n = k$ and $S_n\subseteq B_{r}(z)$.
\end{proposition}

Now we can prove the main result of this subsection.

\begin{proposition}\label{prop:cpt-operator}
	Let $(\mathcal{X},d)$ be separable, and let $\{(\mu_n,k_n,R_n)\}_{n\in\N}$ and $(\mu,k,R)$ in $\mathcal{P}_p(\mathcal{X})\times \N\times \closed^{\tau}(\mathcal{X})$ be such that $(\mu,k,R)$ is non-singular, and
	\begin{itemize}
		\item $\mu_n\to \mu$ in $\weakp$,
		\item $k_n\neq k$ for finitely many $n\in\N$, and
		\item $\kurouter^{\tau}_{n\in\N}R_n \subseteq R \subseteq \kurinner^{d}_{n\in\N}R_n$.
	\end{itemize}
	If $\tau$ satisfies \textnormal{(W1)} and \textnormal{(W2)}, then, if $S_n\in C_p(\mu_n,k_n,R_n)$ for all $n\in\N$, there exists $\{n_j\}_{j\in\N}$ and $S\in C_p(\mu,k,R)$ such that $S_{n_j}\to S$ in $\tau$ as $j\to\infty$.
	If $\tau$ satisfies \textnormal{(W1)}, \textnormal{(W2)}, and \textnormal{(W3)}, then, if $S_n\in C_p(\mu_n,k_n,R_n)$ for all $n\in\N$, there exists $\{n_j\}_{j\in\N}$ and $S\in C_p(\mu,k,R)$ such that hence $S_n\to S$ in $d$ as $j\to\infty$.
\end{proposition}

\begin{proof}
For each $n\in\N$ let $S_n\in C_p(\mu,k,R)$ be arbitrary, and get $\{n_j\}_{j\in\N}$ and $S\subseteq R$ with $\#S\le k$ as in  the first part of Proposition~\ref{prop:structural}.
In order to prove the first claim of this result, it remains to show that $S$ is optimal and that we have $S_{n_j}\to S$ in $\tau$.
We easily see that $S$ is feasible by construction.
To show optimality, we let $J := \{j_i\}_{i\in\N}$ be an arbitrary subsequence, and we use the inequality of Proposition~\ref{prop:structural} along with Lemma~\ref{lem:partial-cty} to get
\begin{equation}\label{eqn:second-1}
	\begin{split}
		W_p(S,\mu) &\le \int_{\mathcal{X}}\liminf_{i\to\infty}d^p(y,S_{n_{j_i}})\, \diff\mu(y) \\
		&\le \liminf_{i\to\infty}\int_{\mathcal{X}}d^p(y,S_{n_{j_i}})\, \diff\mu(y) \\
		&= \liminf_{i\to\infty}W_p(S_{n_{j_i}},\mu) = \liminf_{i\to\infty}W_p(S_{n_{j_i}},\mu_{n_{j_i}})
	\end{split}
\end{equation}
Now take arbitrary $S'\subseteq R$ with $\#S'= k$.
By Lemma~\ref{lem:Kur-lower-subseq} we can get a subsequence $\{i_u\}_{u\in\N}$ and a set $S_u'\subseteq R_{n_{j_{i_u}}}$ with $\#S_u' =\#S' = k$ for all $u\in\N$ with $\dHaus(S'_u,S')\to 0$.
Therefore, Lemma~\ref{lem:W-cts} implies
\begin{align*}
	\liminf_{i\to\infty}W_p(S_{n_{j_i}},\mu_{n_{j_i}}) &\le \liminf_{u\to\infty}W_p(S_{n_{j_{i_u}}},\mu_{n_{j_{i_u}}}) \\
	&\le \liminf_{u\to\infty}W_p(S_{u}',\mu_{n_{j_{i_u}}}) = W_p(S',\mu).
\end{align*}
Taking the infimum over all such $S'$, we have proven
\begin{equation*}
	W_p(S,\mu) \le m_{k,p}(\mu,R).
\end{equation*}
Therefore, $S$ is optimal, hence $S\in C_p(\mu,k,R)$.
By the non-singularity of $(\mu,k,R)$, this also implies $\#S=k$.
Therefore, combining Proposition~\ref{prop:bdd} with the second part of Proposition~\ref{prop:structural}, we get $S_{n_j}\to S$ in $\tau$.
In other words, we can write $S_{n_j} = \{a_1^{j},\ldots, a_{k}^{j}\}$ for all $j\in\N$ and $S = \{a_1,\ldots, a_{k}\}$ so that we have $a_{\ell}^{j}\to a_{\ell}$ in $\tau$ for all $1\le \ell \le k$.

Before we can prove the second claim, we must make some preparations.
Let us define, for each subsequence $J:= \{j_i\}_{i\in\N}$, the set $B_{J} := \{y\in \mathcal{X}: d(y,S) = \liminf_{i\to\infty}d(y,S_{n_{j_i}})\}$.
Crucially, observe that the optimality of $S$ implies that the inequalities of \eqref{eqn:second-1} are in fact equalities, and hence that $\mu(B_{J}) = 1$ for each $J$.
While of course the uncountable intersection of full-measure sets need not have full measure, it suffices by Lemma~A.1 in the supplementary material to further establish that each set is $d$-closed.
To see that $B_J$ is indeed $d$-closed, suppose that $\{y_m\}_{m\in\N}$ in $B_J$ have $y_m\to y\in \mathcal{X}$ in $d$.
Then:
\begin{align*}
	&\left|d(y,S) - \liminf_{i\to\infty}d(y,S_{n_{j_i}})\right| \\
	&= |d(y,S) - d(y_m,S)| + \left|\liminf_{i\to\infty}d(y_m,S_{n_{j_i}}) - \liminf_{i\to\infty}d(y,S_{n_{j_i}})\right| \\
	&= |d(y,S) - d(y_m,S)| + \limsup_{i\to\infty}|d(y_m,S_{n_{j_i}}) - d(y,S_{n_{j_i}})| \le 2d(y_m,y) \to 0.
\end{align*}
Consequently, the set $B := \bigcap_{J}B_J$ satisfies $\mu(B)= 1$.

Next, we define the sets $V_{\ell} := \{y\in \mathcal{X}: d(y,a_{\ell}) = d(y,S)\}$ for $1\le\ell\le k$.
We claim that for all $1\le\ell\le k$ there exists $y\in V_{\ell}\cap B$ satisfying $d(y,S_{n_j}) = d(y,a_{\ell}^{j})$ for sufficiently large $j\in\N$.
If not, then there exists $1\le \ell\le k$ such that for all $y\in V_{\ell}\cap B$ there exists $\ell(y)\in \{1,\ldots, k\}\setminus\{\ell\}$ such that we have $d(y,S_{n_j}) = d(y,a_{\ell(y)}^{j})$ for infinitely many $j\in\N$.
Denoting by $\{j_i\}_{i\in\N}$ such a subsequence, we can use (W2) and $y\in V_{\ell}\cap B$ to bound:
\begin{align*}
	d\left(y,S\setminus\{a_{\ell}\}\right) \le d\left(y,a_{\ell(y)}\right) &\le \liminf_{j\to\infty}d\left(y,a_{\ell(y)}^{j}\right) \\
	&\le \liminf_{i\to\infty}d\left(y,a_{\ell(y)}^{j_i}\right) \\
	&= \liminf_{i\to\infty}d\left(y,S_{n_{j_i}}\right) = d(y,S) = d(y,a_{\ell}).
\end{align*}
In other words, $V_{\ell}\cap B\subseteq \bigcup_{\ell'\neq \ell}V_{\ell'}$.
However, this implies that $S\setminus\{a_{\ell}\}$ is optimal, which contradicts the non-singularity of $(\mu,k,R)$.

Now we put all the pieces together.
For each $1\le \ell \le k$, we use the above to get some $y\in V_{\ell}\cap B$ satisfying $d(y,S_{n_j}) \in d(y,a_{\ell}^{j})$ for sufficiently large $j\in\N$.
Then let $\{j_i\}_{i\in\N}$ be any subsequence satisfying
\begin{equation*}
	\limsup_{j\to\infty}d\left(y,S_{n_j}\right) = \lim_{i\to\infty}d\left(y,S_{n_{j_i}}\right),
\end{equation*}
and use (W2) and $y\in V_{\ell}\cap B$ to get:
\begin{align*}
	\limsup_{j\to\infty}d\left(y,a_{\ell}^{j}\right) =\limsup_{j\to\infty}d\left(y,S_{n_j}\right) &= \lim_{i\to\infty}d\left(y,S_{n_{j_i}}\right) \\
	&= \liminf_{i\to\infty}d\left(y,S_{n_{j_i}}\right) \\
	&= d(y,S) = d(y,a_{\ell}) \le \liminf_{j\to\infty}d\left(y,a_{\ell}^{j}\right).
\end{align*}
This shows $d(y,a_{\ell}^{j})\to d(y,a_{\ell})$ as $j\to\infty$.
Combining this with $a_{\ell}^{j}\to a_{\ell}$ in $\tau$ as $j\to\infty$ and (W3), this implies $a_{\ell}^{j}\to a_{\ell}$ in $d$ as $j\to\infty$.
Thus we have shown $S_{n_j}\to S$ in $d$, and this completes the proof.
\end{proof}

The next goal is to show that Proposition~\ref{prop:cpt-operator} implies a uniform convergence result with respect to the Hausdorff metric $\dHaus$.
In order to make this precise, we need the following form of regularity.

\begin{lemma}\label{lem:C-regularity}
	If $(\mathcal{X},d)$ is separable, $\tau$ satisfies \textnormal{(W1)}, \textnormal{(W2)}, and \textnormal{(W3)}, and $(\mu,k,R)\in\mathcal{P}_p(\mathcal{X})\times\N\times\closed^{\tau}(\mathcal{X})$ is non-singular, then $C_p(\mu,k,R)$ is non-empty and $\dHaus$-compact.
\end{lemma}

\begin{proof}
Lemma~\ref{lem:non-empty} gives non-emptiness, and compactness follows immediately from the second part of Proposition~\ref{prop:cpt-operator}.
\end{proof}

\begin{lemma}\label{lem:nonsingular-limits}
	If $(\mathcal{X},d)$ is separable, $\tau$ satisfies \textnormal{(W1)}, \textnormal{(W2)}, and \textnormal{(W3)}, and $\{(\mu_n,R_n)\}_{n\in\N}$  in $\mathcal{P}_p(\mathcal{X})\times\closed^{\tau}(\mathcal{X})$ and $(\mu,k,R)$ in $\mathcal{P}_p(\mathcal{X})\times \N\times\closed^{\tau}(\mathcal{X})$ are such that $(\mu,k,R)$ is non-singular and
	\begin{itemize}
		\item $\mu_n\to \mu$ in $\weakp$, and
		\item $\kurouter^{\tau}_{n\in\N}R_n \subseteq R \subseteq \kurinner^{d}_{n\in\N}R_n$,
	\end{itemize}
	then $m_{\ell,p}(\mu_n,R_n)\to m_{\ell,p}(\mu,R)$ for all $1\le \ell\le k$.
\end{lemma}

\begin{proof}
	Let us show that any subsequence of $\{m_{\ell,p}(\mu_{n},R_{n})\}_{n\in\N}$ has a further subsequence converging to $m_{\ell,p}(\mu,R)$.
	Indeed, take arbitrary $\{n_j\}_{j\in\N}$, and, for each $j\in\N$ use Lemma~\ref{lem:non-empty} to get some $S_j\in C_{p}(\mu_{n_j},\ell,R_{n_j})$.
	Since $(\mu,k,R)$ being non-singular certainly implies that $(\mu,\ell,R)$ is non-singular, we can apply Proposition~\ref{prop:cpt-operator} to get a further subsequence $\{j_i\}_{i\in\N}$ and some $S\in C_p(\mu,\ell,R)$ such that $\dHaus(S_{j_i}, S)\to 0$ as $i\to\infty$.
	Finally, note that Lemma~\ref{lem:W-cts} gives
	\begin{align*}
	m_{\ell,p}(S_{j_i},R_{n_{j_i}}) = W_p(S_{j_i},\mu_{n_{j_i}}) \to W_p(S,\mu) = m_{\ell,p}(\mu,R)
	\end{align*}
	whence the result.
\end{proof}

This result can also be used to show the following sufficient condition for non-singularity.
(However, note that in applications we are often able to show non-singularity of $(\bar \mu_n,k_n,R_n)$ for all $n\in\N$ directly, for example if the population distribution is atomless.)

\begin{lemma}\label{lem:nonsingular-inherited}
	If $(\mathcal{X},d)$ is separable, $\tau$ satisfies \textnormal{(W1)}, \textnormal{(W2)}, and \textnormal{(W3)}, and $\{(\mu_n,R_n)\}_{n\in\N}$  in $\mathcal{P}_p(\mathcal{X})\times\closed^{\tau}(\mathcal{X})$ and $(\mu,k,R)$ in $\mathcal{P}_p(\mathcal{X})\times \N\times\closed^{\tau}(\mathcal{X})$ are such that $(\mu,k,R)$ is non-singular and
	\begin{itemize}
		\item $\mu_n\to \mu$ in $\weakp$, and
		\item $\kurouter^{\tau}_{n\in\N}R_n \subseteq R \subseteq \kurinner^{d}_{n\in\N}R_n$,
	\end{itemize}
	then $(\mu_n,k_n,R_n)$ is non-singular for sufficiently large $n\in\N$.
\end{lemma}

\begin{proof}
Set $\varepsilon:= \min\{m_{\ell-1,p}(\mu,R) - m_{\ell,p}(\mu,R): 1 < \ell \le k\}$ which is strictly positive since $(\mu,k,R)$ is non-singular.
By Lemma~\ref{lem:nonsingular-limits} we can take $n\in\N$ sufficiently large to get $\sup_{1<\ell\le k}|m_{\ell,p}(\mu_n,R_n) - m_{\ell,p}(\mu,R)| < \varepsilon/2$, and for such $n$ we have $m_{1,p}(\mu_n,R_n) > \cdots > m_{k,p}(\mu_n,R_n)$.
\end{proof}

Now our uniform convergence result follows:

\begin{proposition}\label{prop:clustering-cty}
	If $(\mathcal{X},d)$ is separable, $\tau$ satisfies \textnormal{(W1)}, \textnormal{(W2)}, and \textnormal{(W3)}, and $\{(\mu_n,k_n,R_n)\}_{n\in\N}$ and $(\mu,k,R)$ in $\mathcal{P}_p(\mathcal{X})\times \N\times \closed^{\tau}(\mathcal{X})$ are such that $(\mu,k,R)$ is non-singular, and
	\begin{itemize}
		\item $\mu_n\to \mu$ in $\weakp$,
		\item $k_n\neq k$ for finitely many $n\in\N$, and
		\item $\kurouter^{\tau}_{n\in\N}R_n \subseteq R \subseteq \kurinner^{d}_{n\in\N}R_n$,
	\end{itemize}
	then we have
	\begin{equation*}
		\max_{S_n\in C_p(\mu_n,k_n,R_n)}\min_{S\in C_p(\mu,k,R)}\dHaus(S_n,S)\to 0
	\end{equation*}
	as $n\to\infty$.
\end{proposition}

\begin{proof}
	Lemma~\ref{lem:nonsingular-inherited} and Lemma~\ref{lem:C-regularity} imply that $\{C_p(\mu_n,k_n,R_n)\}_{n\in\N}$ and $C_p(\mu,k,R)$ are in $\cbdd(\cbdd(\mathcal{X}),\dHaus)$ for sufficiently large $n\in\N$, so the supremum and infimum can indeed be replaced with maximum and minimum.
	(See Remark~\ref{rem:dvecHaus-finite}.)
	Now to prove the claim it suffice to show that for each subsequence $\{n_j\}_{j\in\N}$ there exists a further subsequence $\{j_i\}_{i\in\N}$ satisfying
	\begin{equation*}
		\max_{S_i\in C_p(\mu_{n_{j_i}},k_{n_{j_i}},R_{n_{j_i}})}\min_{S\in C_p(\mu,k,R)}\dHaus(S_i,S)\to 0
	\end{equation*}
	as $i\to\infty$.
	To do this, take arbitrary $\{n_j\}_{j\in\N}$, use Lemma~\ref{lem:C-regularity} to get $S_j'\in C_p(\mu_{n_j},k_{n_j},R_{n_j})$ satisfying
	\begin{equation*}
		\min_{S\in C_p(\mu,k,R)}\dHaus(S_j',S) = \max_{S_j\in C_p(\mu_{n_j},k_{n_j},R_{n_j})}\min_{S\in C_p(\mu,k,R)}\dHaus(S_j,S).
	\end{equation*}
	Finally, use Proposition~\ref{prop:cpt-operator} to get $S'\in C_p(\mu,k,R)$ and $\{j_i\}_{i\in\N}$ such that we have $\dHaus(S_{j_i}',S')\to 0$ as $i\to\infty$, and note that this implies
	\begin{align*}
		\max_{S_i\in C_p(\mu_{n_{j_i}},k_{n_j},R_{n_j})}\min_{S\in C_p(\mu,k,R)}\dHaus(S_{j_i}',S) &= \min_{S\in C_p(\mu,k,R)}\dHaus(S_{j_i}',S) \\
		&\le \dHaus(S_{j_i}',S') \to 0,
	\end{align*}
	as needed.
\end{proof}

In addition to the preceding continuity result, we describe the continuity of the adaptive choice of $k$ arising in the elbow method.
(Note that in this case, our ``continuity'' statement is a bona fide continuity statement.)
To set this up, we let $\mu\in \mathcal{P}_p(\mathcal{X})$ be arbitrary, and we define
\begin{equation*}
	\Delta^2m_{k,p}(\mu):=m_{k+1,p}(\mu)+m_{k-1,p}(\mu)-2m_{k,p}(\mu)
\end{equation*}
for $k\ge 2$; the restriction to $k\ge 2$ is equivalent to adopting the convention $m_{0,p}(\mu) = \infty$.
Then we define the function $\kelbow_p(\mu):\mathcal{P}_p(\mathcal{X})\to \N\cup\{\infty\}$ via
\begin{equation*}
	\kelbow_p(\mu) := \min\{\arg\max\{\Delta^2m_{k,p}(\mu): k\in\N \}\}
\end{equation*}
for $\mu\in \mathcal{P}_p(\mathcal{X})$.
This leads us to the following result; the proof is straightforward but slightly long, so we defer it to the supplementary material.

\begin{proposition}\label{prop:elbow-k-cts}
	Suppose that $(\mathcal{X},d)$ is a Polish metric space and that $\mu\in \mathcal{P}_p(\mathcal{X})$ satisfies $\#\supp(\mu) = \infty$ and $\#\arg\max\{\Delta^2m_{k,p}(\mu): k\in\N , k\ge 2\} =1$.
	Then, the function $\kelbow_p:\mathcal{P}_p(\mathcal{X})\to \N$ is continuous at $\mu$.
\end{proposition}

The condition $\#\arg\max\{\Delta^2m_{k,p}(\mu): k\in\N , k\ge 2\} =1$ should be interpreted as saying that the distribution $\mu$ has a uniquely-defined number of clusters, in the sense of the elbow method.
Thus, the preceding result says that, if $\mu_n\to \mu$ in $\weak^p$ and $\mu$ has a uniquely-defined number of clusters, then the same is true for $\mu_n$ for sufficiently large $n\in\N$.

\section{Probabilistic results}\label{sec:results}

We now show how the considerations of the previous section can be used to prove a number of limit theorems for clustering procedures of interest in statistical applications.
More precisely, Subsection~\ref{subse:results-SLLN} addresses the main question of strong consistency of adaptive clustering procedures under IID data.
We then consider two secondary questions.
That is, in Subsection~\ref{subse:results-MCs} we address the question of strong consistency for $(k,p)$-means and $(k,p)$-medoids under MC data, and in Subsection~\ref{subse:results-LDP} we address the question of large deviations estimates for $(k,p)$-means under IID data.
\\

For this section, we let $(\mathcal{X},d)$ be a separable metric space admitting a weak convergence, and $1\le p < \infty$.
An underlying probability space will be denoted $(\Omega,\F,\P)$ and will be assumed to be complete.

\begin{remark}
	One may be concerned about the measurability of the ``events'' appearing throughout this section.
	As the proofs all demonstrate, these events are supersets of bona fide events whose $\P$-probabilities are equal to one, hence they are at least measurable with respect to the $\P$-completion of $\F$.
	For this reason we will not address further questions of measurability in this paper; the interested reader should see \cite[Section~3]{EvansJaffeFrechet} for a resolution of similar measurability concerns in the context of Fr\'echet means, which is equivalent to fixing $k=1$ in the current work.
\end{remark}

\subsection{Strong consistency  for IID data}\label{subse:results-SLLN}

Suppose that $Y_1,Y_2,\ldots$ is an IID sequence of random variables with common distribution $\mu\in \mathcal{P}_p(\mathcal{X})$, and define the empirical measures via $\bar \mu_n := \frac{1}{n}\sum_{i=1}^{n}\delta_{Y_i}$ for all $n\in\N$.
First, we have the following fundamental strong consistency result, which follows easily from our preparations.
\begin{proof}[Proof of Theorem~\ref{thm:Haus-consistency}]
	We have $\bar \mu_n\to \mu$ in $\weak^p$ almost surely from \cite[Proposition~4.1]{EvansJaffeFrechet}, so the result follows from Proposition~\ref{prop:clustering-cty}.
\end{proof}

To see that this result has immediate consequences for strong consistency of $(k,p)$-means, suppose that $\mu\in \mathcal{P}_p(\mathcal{X})$ has $\supp(\mu) = \infty$.
Then fix $k\in\N$, and take $k_n= k$ and $R_n = R = \mathcal{X}$ for all $n\in\N$.
It immediately follows that we have
\begin{equation*}
\max_{S_n\in C_{k,p}(\bar \mu_n)}\min_{S\in C_{k,p}(\mu)}\dHaus(S_n,S)\to 0
\end{equation*}
almost surely.
In words, this says that $(k,p)$-means is strongly consistent.

For $(k,p)$-medoids, we fix $k_n = k\in\N$ for $n\in\N$, and we take $R_n = \supp^{\tau}(\bar \mu_n) = \{Y_1,\ldots, Y_n\}$ for $n\in\N$ and $R = \supp^{\tau}(\mu)$.
In order to apply Theorem~\ref{thm:Haus-consistency},we require the following:

\begin{lemma}\label{lem:supp-converge}
	If $(\mathcal{X},d)$ is a separable and $\tau$ is any Hausdorff topology weaker than $d$, then we have $\kurouter^{\tau}_{n\in\N}\supp^{\tau}(\bar \mu_n)\subseteq \supp^{\tau}(\mu)\subseteq \kurinner^{d}_{n\in\N}\supp^{\tau}(\bar \mu_n)$ almost surely.
\end{lemma}

\begin{proof}
	Recall that $\supp^{\tau}(\bar \mu_n) = \{Y_1,\ldots, Y_n\}$ almost surely.
	To show $\kurouter^{\tau}_{n\in\N}\supp^{\tau}(\bar \mu_n)\subseteq \supp^{\tau}(\mu)$ almost surely, consider any $x\notin \supp^{\tau}(\mu)$.
	This means there exists a $\tau$-open set $U\subseteq \mathcal{X}$ with $x\in U$ with $\mu(U) = 0$.
	In particular, this implies $Y_n\notin U$ for all $n\in\N$ almost surely, which implies that $x$ cannot be the limit point of any $\tau$-convergent subsequence of elements of $\{\supp^{\tau}(\bar \mu_n)\}_{n\in\N}$.
	To show $\supp^{\tau}(\mu)\subseteq \kurinner^{d}_{n\in\N}\supp^{\tau}(\bar \mu_n)$ almost surely, note that we almost surely have $\bar \mu_n\to \mu$ topology of weak convergence with respect to $d$.
	Thus, the result follows from \cite[Lemma~2.15]{EvansJaffeFrechet}.
\end{proof}

Consequently, we get
\begin{equation*}
	\max_{S_n\in \clustermedoid_{k,p}(\bar \mu_n)}\min_{S\in \clustermedoid_{k,p}(\mu)}\dHaus(S_n,S)\to 0
\end{equation*}
almost surely, which states that $(k,p)$-medoids is strongly consistent.
Note that we used $\supp^{\tau}(\mu)$ instead of $\supp(\mu)$ in this result, and that we of course have $\supp^{\tau}(\mu)\supseteq\supp(\mu)$ in general.

For $(k,p)$-means where $k$ is chosen adaptively according to the elbow method, we take $k_n = \kelbow_p(\bar \mu_n)$ for all $n\in\N$ as well as $k=\kelbow_p(\mu)$, and $R_n = R =\mathcal{X}$ for all $n\in\N$.
If we assume additionally that $(\mathcal{X},d)$ is a Polish metric space and $\#\arg\max\{\Delta^2m_{k,p}(\mu): k\in\N, k\ge 2\} =1$, then Proposition~\ref{prop:elbow-k-cts} implies
\begin{equation*}
	\max_{S_n\in \clusterelbow_{p}(\bar \mu_n)}\min_{S\in \clusterelbow_{p}(\mu)}\dHaus(S_n,S)\to 0
\end{equation*}
almost surely.
In words, $(k,p)$-means, where $k$ is chosen adaptively according to the elbow method, is strongly consistent provided that $\mu$ has a uniquely-defined number of clusters in the sense of the elbow method.

Let us also address a computational application of these results.
Indeed, if $\mu\in\mathcal{P}_p(\mathcal{X})$ satisfies $\supp(\mu) = \mathcal{X}$, then for any $k\in\N$ we have $\clustermedoid_{k,p}(\mu)=C_{k,p}(\mu)$, hence
\begin{equation*}
	\max_{S_n\in \clustermedoid_{k,p}(\bar \mu_n)}\min_{S\in C_{k,p}(\mu)}\dHaus(S_n,S)\to 0
\end{equation*}
almost surely.
Roughly speaking, this means that each set of empirical $(k,p)$-medoids cluster centers must be close to some set of population $(k,p)$-means cluster centers, at least asymptotically.
To see that this is interesting, recall on the one hand that there is no known algorithm to exactly compute the empirical $(k,p)$-means cluster centers of $n$ data points, and observe on the other hand that the empirical $(k,p)$-medoids cluster centers of $n$ data points can be computed exactly in $O(Dkn^{k+1})$ time, where $D$ is the time of computing the distance between any two points.
Thus, it is interesting to notice that the empirical $(k,p)$-medoids problem can be consistently used as a surrogate for the harder $(k,p)$-means problem.
Although the cost of exactly computing $(k,p)$-medoids is certaintly large, we believe that it would be interesting to try to develop efficient randomized algorithms for this task, in the spirit of \cite{UltraFastMedoids}.

\subsection{Strong consistency for MC data}\label{subse:results-MCs}

Suppose that $Y_1,Y_2,\ldots$ is an aperiodic Harris-recurrent Markov chain (MC) on $\mathcal{X}$, and let $\mu$ denote its unique stationary distribution.
As always, write $\bar \mu_n := \frac{1}{n}\sum_{i=1}^{n}\delta_{Y_i}$ for the empirical measures of the first $n\in\N$ data points.
Then we get the following:

\begin{proof}[Proof of Theorem~\ref{thm:MC-consistency}]
By \cite[Theorem~4]{MCMC} we have $\bar \mu_n \to \mu$ in total variation, and by \cite[Fact~5]{MCMC} we have $\frac{1}{n}\sum_{i=1}^{n}d^p(x,Y_i)\to \int_{\mathcal{X}}d^p(x,y)\,\diff\mu(y)$, both holding almost surely; in particular, we have $\bar \mu_n\to \mu$ in $\weakp$ almost surely.
Thus, the result follows from Theorem~\ref{thm:Haus-consistency} by taking $R_n = R = \mathcal{X}$ and $k_n = k$ for all $n\in\N$.
\end{proof}

In words, this result guarantees that $(k,p)$-means is strongly consistent when applied to data coming from a suitable MC.
A typical application of interest in which data assumed to follow such a Markovian structure is the setting where $Y_1,Y_2,\ldots$ are samples from a posterior distribution in Bayesian statistics which are computed via Markov chain Monte Carlo (MCMC).
In this setting, one typically has $\mathcal{X}=\R^m$ with its usual metric, which certainly satisfies our hypothesis.
Moreover, if $\mu$ and $P(x,\cdot)$ for all $x\in\R^m$ have strictly positive densities with respect to the Lebesgue measure, then the only hypothesis that one needs to check is the simple $\int_{\R^m}|y|^p\, \diff\mu(y) < \infty$.

The case of $(k,p)$-medoids applied to data coming from a MC is more subtle.
For the sake of simplicity, we focus on the case of MCs on finite state spaces.
Thus, let us make the following assumption for the remainder of this subsection: $(\mathcal{X},d)$ is a finite metric space, and $\mathcal{X}$ can be decomposed into $\mathcal{X} = \mathcal{X}_0\sqcup\mathcal{X}_1$ where $\mathcal{X}_0$ are the inessential states with respect to $P$ (that is, the states that are almost surely visited finitely often) and $\mathcal{X}_1$ are the essential states with respect to $P$ (that is, the states that are almost surely visited infinitely often), and $P$ is aperiodic and irreducible on $\mathcal{X}_1$.

It is illustrative to see what can go wrong in the simplest possible setting:

\begin{example}
	Consider $\mathcal{X} =  \{-1,0,1\}$ with the metric inherited from the real line.
	Then let $Y_1,Y_2,\ldots$ be a MC with $Y_1 = 0$ and with the transition matrix
	\begin{equation*}
		P = \begin{pmatrix}
			\sfrac{1}{2} & 0 & \sfrac{1}{2} \\
			\sfrac{1}{3} & \sfrac{1}{3} & \sfrac{1}{3} \\
			\sfrac{1}{2} & 0 & \sfrac{1}{2}
		\end{pmatrix}.
	\end{equation*}
	In words, this MC stays at state 0 for a geometric amount of time, then subsequently visits $\{-1,1\}$ independently and uniformly at random.
	It is clear that the only stationary distribution for this MC is $\mu= \frac{1}{2}\delta_{-1}+\frac{1}{2}\delta_{1}$, and that it is aperiodic and Harris-recurrent.
	
	Now write $I:= \max\{i\in\N: Y_i = 0 \}$ for the number of data equal to 0 and $Z_n := \sum_{i=1}^{n}Y_i$ for the cumulative sum of the data.
	Observe in particular that $I < \infty$ almost surely and that we have $\bar \mu_n = \frac{1}{2n}(n-Z_n-I)\delta_{-1} + \frac{I}{n}\delta_{0} +\frac{1}{2n}(n+Z_n-I)\delta_{1}$ for $n> I$.
	Moreover, $\{Z_n\}_{n> I}$ is a simple symmetric random walk.
	Next notice that on $\{n> I,Z_n = 0\}$ we have
	\begin{equation*}
			\int_{\mathcal{X}}d^p(\pm 1,y)\, d\bar \mu_n(y) = \frac{I}{n}+\left(1-\frac{I}{n}\right)2^{p-1} > 1-\frac{I}{n} = \int_{\mathcal{X}}d^p(0,y)\, d\bar \mu_n(y),
	\end{equation*}
	hence $\clustermedoid_{1,p}(\bar \mu_n) = \{\{0\}\}$.
	Also, $\clustermedoid_{1,p}(\mu) = \{\{-1\},\{1\}\}$.
	In particular, we have shown
	\begin{equation*}
		\max_{S_n\in \clustermedoid_{1,p}(\bar \mu_n)}\min_{S\in \clustermedoid_{1,p}(\mu)}\dHaus(S_n,S) = 1
	\end{equation*}
	on $\{n> I,Z_n = 0\}$.
	Finally, notice that we have $\P(n> I,Z_n = 0 \text{ for infinitely many } n\in\N) = 1$ by the recurrence of the simple random walk, hence we are able to conclude that $\max_{S_n\in \clustermedoid_{1,p}(\bar \mu_n)}\min_{S\in \clustermedoid_{1,p}(\mu)}\dHaus(S_n,S) \to 0$ occurs with probability zero.
	In words, $(k,p)$-medoids for data coming from this MC is strongly inconsistent.
\end{example}

We now introduce a method to repair this apparent deficiency.
The difficulty in the preceding example is that the adaptively-chosen domain of the cluster centers includes some states not included in the support of the stationary distribution, so, to get around this, we allow our clustering procedure to ``forget'' some initial segment of states.
This is similar to giving the MC a suitable ``burn-in'' period.

Indeed, let $f = \{f_n\}_{n\in\N}$ be any integer sequence with $0\le f_n \le n$ for all $n\in\N$.
Then define
\begin{equation*}
	\bar \mu_n^f := \frac{1}{n-f_n}\sum_{i=f_n+1}^{n}\delta_{Y_i}
\end{equation*}
for $n\in\N$; that is, $\{\bar \mu_n^f\}_{n\in\N}$ are the empirical measures of only the most recent data points, where we forget initial segments of sizes determined by $f$.

\begin{lemma}\label{lem:forgetting}
	In the setting of Theorem~\ref{thm:MC-consistency}, if $f_n/n\to 0$ and $f_n\to \infty$, then
	\begin{itemize}
		\item[(i)] $\bar \mu_n^f \to \mu$ in $\weak$ almost surely, and
		\item[(ii)] $\kurlimit_{n\in\N}\supp(\bar \mu_n^f) =\supp(\mu)$ almost surely.
	\end{itemize}
\end{lemma}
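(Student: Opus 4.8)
The plan is to reduce both parts to the ergodic theorem for $\{\bar\nu_n\}_{n\in\N}$ already recorded above, exploiting two simplifications coming from the finiteness of $(X,d)$: first, $\mathcal{P}(X)$ is a finite-dimensional simplex on which $\weak$-convergence is exactly coordinatewise convergence of the mass vectors; and second, $\closed(X)=\cpt(X)$ is the collection of \emph{all} subsets of $X$, carrying the discrete topology.

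For (i), I would write the windowed measure as an affine combination of two ordinary empirical measures. Since $\sum_{i=f_n+1}^{n}\delta_{Y_i}=n\bar\nu_n-f_n\bar\nu_{f_n}$, we obtain, for every $n$ large enough that $0<f_n<n$,
\[
\bar\nu_n^f=\frac{n}{n-f_n}\,\bar\nu_n-\frac{f_n}{n-f_n}\,\bar\nu_{f_n}.
\]
The hypotheses $f_n/n\to 0$ and $f_n\to\infty$ give $\frac{n}{n-f_n}\to 1$ and $\frac{f_n}{n-f_n}\to 0$. On the almost sure event $\{\bar\nu_m\to\nu\}$ we have $\bar\nu_n\to\nu$ and also $\bar\nu_{f_n}\to\nu$ (this is just convergence of $\bar\nu_m$ along the index sequence $f_n\to\infty$), while $\bar\nu_{f_n}$ remains in the bounded simplex. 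Reading the display coordinatewise then yields $\bar\nu_n^f\to\nu$ almost surely, which is (i).

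For (ii), I would first observe that in the discrete space $X$ the relation $\kurlimit_{n\in\N}A_n=A$ for subsets $A_n,A\subseteq X$ is equivalent to $A_n=A$ for all sufficiently large $n$; since $X$ is finite it therefore suffices to establish the two inclusions $\supp(\bar\nu_n^f)\subseteq\supp(\nu)$ and $\supp(\nu)\subseteq\supp(\bar\nu_n^f)$, each holding eventually, almost surely. The second inclusion is an immediate corollary of (i): for each $x\in\supp(\nu)=X_1$ we have $\bar\nu_n^f(\{x\})\to\nu(\{x\})>0$, so $\bar\nu_n^f(\{x\})>0$, i.e.\ $x\in\supp(\bar\nu_n^f)$, for all large $n$; taking the (finite) maximum of these thresholds over $x\in X_1$ gives $\supp(\nu)\subseteq\supp(\bar\nu_n^f)$ eventually. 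For the first inclusion I would invoke that $X_0$ is an inessential class of a finite chain, so its states are transient and hence visited only finitely often, whence $\sigma:=\max\{i\in\N:Y_i\in X_0\}<\infty$ almost surely; since $f_n\to\infty$ we have $f_n\ge\sigma$ for all large $n$, so the window $\{f_n+1,\dots,n\}$ meets only $X_1$ and $\supp(\bar\nu_n^f)\subseteq X_1=\supp(\nu)$. Combining the two inclusions yields (ii).

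The affine decomposition and the coordinatewise limit are routine. The one genuinely Markovian step, and the step I expect to require the most care, is the upper inclusion $\supp(\bar\nu_n^f)\subseteq\supp(\nu)$: it cannot be deduced from (i) alone, since $\bar\nu_n^f(\{x\})\to 0$ for $x\in X_0$ only says the mass on $X_0$ vanishes, not that it is exactly zero (the support could keep absorbing states of arbitrarily small positive mass). It is precisely here that the finitely-many-visits property of the inessential class $X_0$ enters, and this is the structural fact whose failure made the un-forgotten $(k,p)$-medoids procedure inconsistent in the preceding example.
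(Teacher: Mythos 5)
Your proof is correct, and it reaches the paper's conclusion by a route that is the same in substance but packaged more elementarily. For (i) you use exactly the paper's decomposition $\bar\nu_n^f=\frac{n}{n-f_n}\,\bar\nu_n-\frac{f_n}{n-f_n}\,\bar\nu_{f_n}$; the only cosmetic difference is that the paper bounds $\|\bar\nu_n^f-\bar\nu_n\|_{\mathrm{TV}}$ while you read the identity coordinatewise (equivalent, since $X$ is finite). Note that your appeal to $\bar\nu_{f_n}\to\nu$ along the subsequence is superfluous: as you yourself observe, boundedness of the mass vector plus $\frac{f_n}{n-f_n}\to 0$ already kills the second term, which is why the paper needs only $f_n/n\to 0$ for this half. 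For (ii), both you and the paper rest on the same two ingredients---positivity of $\nu$ on $X_1$ combined with (i) for the lower inclusion, and the almost sure finiteness of the number of visits to the inessential class $X_0$ combined with $f_n\to\infty$ for the upper inclusion---but the packaging differs: the paper works directly with the Kuratowski limits, invoking \cite[Lemma~2.10]{EvansJaffeFrechet} to get $\supp(\nu)\subseteq\kurinner_{n\in\N}\supp(\bar\nu_n^f)$ and running a subsequence argument (any $x\in\kurouter_{n\in\N}\supp(\bar\nu_n^f)$ is visited at times $\ge f_{n_j}+1\to\infty$, hence infinitely often, hence $x\notin X_0$), whereas you first reduce Kuratowski convergence in a finite discrete space to eventual equality of sets and then check both eventual inclusions. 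Your reduction is valid (isolated points make $\kurouter$ the states hit infinitely often and $\kurinner$ the states hit eventually, and finiteness lets you take a maximum of thresholds), and it buys a self-contained argument avoiding both the external citation and the Kuratowski formalism; the paper's phrasing is the one that would survive beyond finite $X$, where eventual equality of supports is hopeless. Your closing remark also correctly isolates the one genuinely Markovian step---last-exit finiteness for $X_0$, via inessential $\Rightarrow$ transient $\Rightarrow$ finitely many visits---which is precisely the paper's mechanism and precisely what fails to be exploited by the unforgetting medoids procedure in the preceding example.
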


\begin{proof}
	For (i), note that $\mathcal{X}$ being finite means that convergence in $\weak$ is equivalent to convergence in the total variation norm, $\|\cdot\|_{\textrm{TV}}$.
	To use this, write
	\begin{equation*}
		\bar \mu_n^f = \frac{n}{n-f_n}\cdot\frac{1}{n}\sum_{i=1}^{n}\delta_{Y_i} - \frac{1}{n-f_n}\sum_{i=1}^{f_n}\delta_{Y_i}
	\end{equation*}
	hence
	\begin{equation*}
		\begin{split}
			\|\bar \mu_n^f-\bar \mu_n\|_{\textrm{TV}} &\le \left|\frac{n}{n-f_n}-1\right|\frac{1}{n}\sum_{i=1}^{n}\|\delta_{Y_i}\|_{\textrm{TV}} + \frac{1}{n-f_n}\sum_{i=1}^{f_n}\|\delta_{Y_i}\|_{\textrm{TV}} \\
			&\le \left|\frac{n}{n-f_n}-1\right| + \frac{f_n}{n-f_n}.
		\end{split}
	\end{equation*}
	Now note that $f_n/n\to 0$ implies that the right side goes to zero, hence $\|\bar \mu_n^f-\bar \mu_n\|_{\textrm{TV}}\to 0$.
	Since $\|\bar \mu_n-\mu\|_{\textrm{TV}}\to 0$ almost surely by the classical ergodic theorem, we conclude $\|\bar \mu_n^f-\mu\|_{\textrm{TV}}\to 0$ whence (i).
	For (ii), note by (i) and \cite[Lemma~2.10]{EvansJaffeFrechet} that we have $\supp(\mu) \subseteq \kurinner_{n\in\N}\supp(\bar \mu_n^f)$ almost surely.
	For the converse, suppose that $x\in\kurouter_{n\in\N}\supp(\bar \mu_n^f)$.
	Since $X$ is discrete, this means that there is a subsequence $\{n_j\}_{j\in\N}$ with $x\in\supp(\bar \mu_{n_j}^f)$ for all $j\in\N$.
	Consequently, there is some sequence $\{\ell_j\}_{j\in\N}$ (not necessarily non-decreasing) with $f_{n_j}+1\le \ell_j \le  n_j$ and $Y_{\ell_j} = x$ for all $j\in\N$.
	Since $f_{n}\to \infty$, this means  $Y_1,Y_2,\ldots$ visits $x$ infinitely often.
	But $\mathcal{X}_0$ is inessential with respect to $P$, so we must have $x\notin \mathcal{X}_0$.
	This implies $x\in \mathcal{X}_1$ hence $x\in \supp(\mu)$ since $\mathcal{X}_1$ is irreducible and aperiodic with respect to $P$.
	We have shown $\kurouter_{n\in\N}\supp(\bar \mu_n^f)\subseteq \supp(\mu)$ almost surely, so combining with the first part gives (ii).
\end{proof}

This, in particular, gives the following strong consistency.

\begin{theorem}\label{thm:MC-medoids}
	In the setting of Theorem~\ref{thm:MC-consistency}, if $f_n/n\to 0$ and $f_n\to \infty$, then we have
	\begin{equation*}
		\max_{S_n\in \clustermedoid_{k,p}(\bar \mu_n^f)}\min_{S\in \clustermedoid_{k,p}(\mu)}\dHaus(S_n,S)\to 0
	\end{equation*}
	almost surely.
\end{theorem}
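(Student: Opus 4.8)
The plan is to recognize that the claimed convergence is exactly the Hausdorff-upper ``continuity'' of the clustering map, and to derive it directly from Proposition~\ref{prop:clustering-Hausdorff-cty} once the two inputs furnished by Lemma~\ref{lem:forgetting} are in place. Recall that by definition $\clustermedoid_{k,p}(\mu) = C_p(\mu,k,\supp(\mu))$, so that $\clustermedoid_{k,p}(\bar \nu_n^f) = C_p(\bar \nu_n^f,k,\supp(\bar \nu_n^f))$ and $\clustermedoid_{k,p}(\nu) = C_p(\nu,k,\supp(\nu))$. Moreover, since $\dvecHaus(A,B) = \max_{S_n\in A}\min_{S\in B}\dHaus(S_n,S)$ for $A,B\in\cpt(\cpt(X))$, the quantity in the statement is precisely $\dvecHaus(\clustermedoid_{k,p}(\bar \nu_n^f),\clustermedoid_{k,p}(\nu))$, so it suffices to show this tends to zero almost surely.

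First I would set up the triples to feed into Proposition~\ref{prop:clustering-Hausdorff-cty}: take $\mu_n := \bar \nu_n^f$ and $\mu := \nu$, take $R_n := \supp(\bar \nu_n^f)$ and $R := \supp(\nu)$, and take $k_n := k$ for all $n\in\N$. The ambient space $X$ is finite, hence trivially a Heine-Borel space, and on $\mathcal{P}(X)$ we have $\weak = \weakp$. Then Lemma~\ref{lem:forgetting}(i) gives $\bar \nu_n^f\to \nu$ in $\weak = \weakp$ almost surely, which is hypothesis (i); hypothesis (ii) is immediate since $k_n = k$ for every $n$; and Lemma~\ref{lem:forgetting}(ii) gives $\kurlimit_{n\in\N}\supp(\bar \nu_n^f) = \supp(\nu)$ almost surely, which is hypothesis (iii).

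The remaining hypothesis is non-singularity of $(\nu,k,\supp(\nu))$, and here I would invoke Lemma~\ref{lem:small-k} with measure $\nu$ and domain $R = \supp(\nu)$: since $R\cap\supp(\nu) = \supp(\nu)$, the standing assumption $k\le \#\supp(\nu)$ (the same one used for the $(k,p)$-means result above) yields that $(\nu,k,\supp(\nu))$ is non-singular. With all four hypotheses verified, Proposition~\ref{prop:clustering-Hausdorff-cty} produces $\dvecHaus(\clustermedoid_{k,p}(\bar \nu_n^f),\clustermedoid_{k,p}(\nu))\to 0$ almost surely, which is exactly the claim.

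I do not expect a genuine obstacle, since all of the analytic content has already been isolated in the earlier results; the only real subtlety is bookkeeping — confirming that the \emph{adaptive} domain $\supp(\bar \nu_n^f)$, rather than a fixed $R$, still satisfies the Kuratowski-limit hypothesis, which is precisely the content of the ``burn-in'' computation in Lemma~\ref{lem:forgetting}(ii), and checking non-singularity via Lemma~\ref{lem:small-k}. In contrast to the naive medoids procedure of the preceding example, the forgetting sequence $f$ with $f_n\to\infty$ is exactly what prevents spurious inessential states from polluting $\supp(\bar \nu_n^f)$ in the Kuratowski limit, thereby restoring the hypotheses needed for the continuity argument to apply.
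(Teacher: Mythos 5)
Your proposal is correct and takes essentially the same route as the paper: the paper's proof reads ``Immediate by Theorem~\ref{thm:Haus-consistency} and Lemma~\ref{lem:forgetting}'', and Theorem~\ref{thm:Haus-consistency} is itself nothing but Proposition~\ref{prop:clustering-Hausdorff-cty} applied on the almost-sure event of $\weak^p$-convergence, with non-singularity supplied by Lemma~\ref{lem:small-k}. Your version simply unpacks this one level by invoking Proposition~\ref{prop:clustering-Hausdorff-cty} directly, which is if anything slightly more careful, since Theorem~\ref{thm:Haus-consistency} is literally stated for empirical measures of IID samples whereas the measures $\bar \nu_n^f$ here are not of that form; you also correctly flag that the standing assumption $k\le \#\supp(\nu)$ is needed for the non-singularity step.
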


\begin{proof}
	Immediate by Theorem~\ref{thm:Haus-consistency} and Lemma~\ref{lem:forgetting}.
\end{proof}

To be concrete in the setting above, one can take $f_n := \lfloor \log n\rfloor$ for $n\in\N$.

\subsection{Large deviations for IID data}\label{subse:results-LDP}

Suppose that $Y_1,Y_2,\ldots$ is an IID sequence of $\mathcal{X}$-valued random variables with common distribution $\mu$.
As always, let us define the empirical measures via $\bar \mu_n := \frac{1}{n}\sum_{i=1}^{n}\delta_{Y_i}$ for all $n\in\N$.
We have already established the almost sure convergence
\begin{equation*}
\max_{S_n\in C_{k,p}(\bar \mu_n)}\min_{S\in C_{k,p}(\mu)}\dHaus(S_n,S)\to 0,
\end{equation*}
and this of course implies the convergence in probability
\begin{equation*}
\P\left(\sup_{S_n\in C_{k,p}(\bar \mu_n)}\inf_{S\in C_{k,p}(\mu)}\dHaus(S_n,S)\ge \varepsilon\right)\to 0,
\end{equation*}
for all $\varepsilon > 0$.
Presently, we address the question of determining the rate at which these probabilities decay to zero.
This is the domain of large deviations theory, the basics of which can be found in \cite{DZ}.

\begin{remark}\label{rem:sup-inf-achieved}
	It is shown in Proposition~\ref{prop:clustering-cty} that, for sufficiently large $n\in\N$, the supremum and infimum in the stated uniform convergence can be replaced with maximum and minimum.
	However, a careful inspection of the proof reveals (via Lemma~\ref{lem:C-regularity} and
	Lemma~\ref{lem:nonsingular-inherited}) that the requisite size of $n\in\N$ is actually random.
	Thus, statements concerning convergence in probability, like the above, must use supremum and infimum.
\end{remark}

To perform this analysis, let us define the map $H(\cdot|\mu): \mathcal{P}(\mathcal{X})\to [0,\infty]$ via
\begin{equation*}
	H(\nu|\mu)  = \begin{cases}
		\int_{\mathcal{X}}\frac{d\nu}{\diff\mu} \log\left(\frac{d\nu}{\diff\mu}\right)\diff\mu, &\text{ if } \nu \textrm{ is absolutely continuous with respect to } \mu, \\
		\infty, &\text{ if } \nu \textrm{ is not absolutely continuous with respect to } \mu, \\
	\end{cases}
\end{equation*}
for $\nu\in \mathcal{P}(\mathcal{X})$.
We call $H(\nu|\mu)$ the \textit{KL divergence} from $\mu$ to $\nu$.
The key to proving our large deviations upper bound is applying a type of contraction to an existing large deviations principle for the empirical measures $\{\bar  \mu_n\}_{n\in\N}$ in the space $(\mathcal{P}_p(\mathcal{X}),\weak^p)$.

\begin{proof}[Proof of Theorem~\ref{thm:LDP}]
	Consider the set
	\begin{equation*}
		\begin{split}
		A &:=  \left\{\nu\in \mathcal{P}_p(\mathcal{X}): \sup_{T\in C_{k,p}(\nu)}\inf_{S\in C_{k,p}(\mu)}\dHaus(T,S) \ge \varepsilon \right\} \\
		&= \left\{\nu\in \mathcal{P}_p(\mathcal{X}): \vec{D}_{\textnormal{H}}(C_{k,p}(\nu),C_{k,p}(\mu)) \ge \varepsilon \right\}.
		\end{split}
	\end{equation*}
	Here, we write $\vec{D}_{\textnormal{H}}$ for the one-sided Hausdorff distance on the collection of all non-empty $\dHaus$-closed subsets of $\cbdd(\mathcal{X})$.
	(See Remark~\ref{rem:dvecHaus-finite} and Remark~\ref{rem:sup-inf-achieved}.)
	Now let us show that $A$ is $\weak^p$-closed.
	Indeed, suppose $\{\nu_n\}_{n\in\N}$ in $A$ and $\nu\in \mathcal{P}_p(\mathcal{X})$ have  $\nu_n\to \nu$ in $\weak^p$.
	Then apply the non-symmetric triangle inequality \eqref{dvecHaus-triangle-ineq} to get:
	\begin{equation*}
	\vec{D}_{\textnormal{H}}(C_{k,p}(\nu_n),C_{k,p}(\nu)) +  \vec{D}_{\textnormal{H}}(C_{k,p}(\nu),C_{k,p}(\mu)) \ge \vec{D}_{\textnormal{H}}(C_{k,p}(\nu_n),C_{k,p}(\mu)) \ge \varepsilon.
	\end{equation*}
	Now use Proposition~\ref{prop:clustering-cty} with $k_n = k$ and $R_n = \mathcal{X}$ for all $n\in\N$, which guarantees that we have $\vec{D}_{\textnormal{H}}(C_{k,p}(\nu_n),C_{k,p}(\nu))\to 0$, hence $\vec{D}_{\textnormal{H}}(C_{k,p}(\nu),C_{k,p}(\mu)) \ge \varepsilon$, by the above.
	In particular, $\nu\in A$, thus $A$ is $\weak^p$-closed.
	
	Next, note that $\mu$ is a Borel probability measure on a Polish metric space with all exponential moments finite, so we conclude via \cite[Theorem~1.1]{WWW} that $\{\bar \mu_n\}_{n\in\N}$ satisfy a large deviations principle in $(\mathcal{P}_p(\mathcal{X}),\weak^p)$ with good rate function $H(\cdot|\mu):\mathcal{P}_p(\mathcal{X})\to  [0,\infty]$.
	In particular, the large deviations upper bound implies
	\begin{equation*}
	\begin{split}
	\limsup_{n\to\infty}\frac{1}{n}\log \P\left(\vec{D}_{\textnormal{H}}(C_{k,p}(\bar \mu_n),C_{k,p}(\mu)) \ge \varepsilon\right) &= \limsup_{n\to\infty}\frac{1}{n}\log \P(\bar \mu_n \in A) \\
	&\le -\inf\{H(\nu|\mu): \nu\in A \} := c_{k,p}(\mu,\varepsilon).
	\end{split}
	\end{equation*}
	Finally, assume towards a contradiction that $c_{k,p}(\mu,\varepsilon)=0$, so that there exist $\{\nu_n\}_{n\in\N}$ in $A$ with $H(\nu_n|\mu)\to 0$.
	Then Lemma~A.2 in the supplementary material implies $\nu_n\to \mu$ in $\weak^p$ so Proposition~\ref{prop:clustering-cty} implies $\vec{D}_{\textnormal{H}}(C_{k,p}(\nu_n),C_{k,p}(\mu))\to 0$.
	This is impossible since $\nu_n\in A$ for all $n\in\N$, hence we must have $c_{k,p}(\mu,\varepsilon)>0$.
\end{proof}

Now, we make some remarks on possible limitations and extensions of this result.
First of all, the constant $c_{k,p}(\mu,\varepsilon)$ appearing as the exponential rate of decay has an exact characterization as
\begin{equation}
c_{k,p}(\mu,\varepsilon) := \inf\{H(\nu|\mu): \nu\in \mathcal{P}_p(\mathcal{X}),  \dvecHaus(C_{k,p}(\nu),C_{k,p}(\mu)) \ge \varepsilon \}.
\end{equation}
From this form we have shown $c_{k,p}(\mu,\varepsilon)> 0$ for all $\varepsilon > 0$, but it appears difficult to say much else.
We believe it would be interesting to try to find some simple geometries $(\mathcal{X},d)$ and simple distributions $\mu\in \mathcal{P}(\mathcal{X})$ for which $c_{k,p}(\mu,\varepsilon)$ can be exactly or approximately computed.
For example, if $\mu$ is compactly-supported, then do we have $c_{k,p}(\mu,\varepsilon)\lesssim\varepsilon^{-2}$, which can be interpreted as a sort of asymptotically sub-Gaussian concentration?
For $k=1$, this is guaranteed by Hoeffding's lemma in Euclidean spaces and by \cite{JaffeSantoroLDP} in Riemannian manifolds and the Wasserstein space.

Second, we address the $L^p$-exponential moment assumption.
As we show below, for $\mathcal{X}=\R^m$ and $p=1$, the assumption holds if the moment generating function of $\mu$ is finite everywhere. 
However, for $p=2$, the condition is too strict to even cover the case where $\mu$ is a standard Gaussian distribution; we conjecture that the condition is not necessary for the conclusion to hold.

\begin{lemma}
	If $\mu\in \mathcal{P}(\R^m)$ has $\int_{\R^m}\exp(\langle \lambda,y\rangle)\, \diff\mu(y)  < \infty$ for all $\lambda \in\R^m$, then we have $\int_{\R^m}\exp(\alpha |x-y|)\, \diff\mu(y) < \infty$ for all $x\in\R^m$ and $\alpha > 0$.
\end{lemma}

\begin{proof}
	Let $N$ be a $\frac{1}{2}$-net of the unit ball $B :=\{\lambda\in \R^m: |\lambda|\le 1 \}$.
	Then for any $\alpha >0$ and $y\in \R^m$, use the standard duality of the $L^2$ inner product and Cauchy-Schwarz to get:
	\begin{equation*}
	\begin{split}
	\exp(2\alpha |y|) =\max_{\lambda\in B} \exp(2\alpha \langle \lambda,y\rangle) &=\max_{\lambda\in B} \min_{\lambda'\in N}\exp(2\alpha \langle \lambda',y\rangle)\exp(2\alpha \langle \lambda-\lambda',y\rangle) \\
	&\le  \exp(\alpha |y|)\max_{\lambda'\in N} \exp(2\alpha \langle \lambda',y\rangle).
	\end{split}
	\end{equation*}
	Rearranging this yields
	\begin{equation*}
	\exp(\alpha |y|) \le \max_{\lambda\in N} \exp(2\alpha \langle \lambda,y\rangle) \le \sum_{\lambda\in N} \exp(2\alpha \langle \lambda,y\rangle).
	\end{equation*}
	Thus, by integrating the above, we have, for any $x\in \R^m$ and $\alpha >0$:
	\begin{equation*}
	\int_{\R^m}\exp(\alpha|x-y|)\, \diff\mu(y) \le \exp(\alpha|x|)\sum_{\lambda\in N} \int_{\R^m}\exp(2\alpha \langle \lambda,y\rangle)\, \diff\mu(y).
	\end{equation*}
	Since the right side is finite by assumption, this proves the claim.
\end{proof}


%
%

\begin{acks}[Acknowledgments]
 The authors thanks the anonymous referees for comments that signifcantly improved the scope and quality of the paper.
 The author also thanks Mo\"ise Blanchard for many useful conversations.
\end{acks}


\begin{funding}
This material is based upon work for which the author was supported by the National Science Foundation Graduate Research Fellowship under Grant No. DGE 1752814
\end{funding}



\bibliographystyle{imsart-number} 
\bibliography{clustering}       


\appendix


	\section{Auxiliary Analytic Results}
	
	In this appendix we prove some auxiliary analysis results that are used in the main body of the paper.
	We believe these results are well-known, but we could not find appropriate references so we prove them for the sake of completeness.
	
	\begin{lemma}
		Suppose $(\mathcal{X},d)$ is a separable metric space and $\mu\in\mathcal{P}(\mathcal{X})$.
		If $\{B_{\gamma}\}_{\gamma\in\Gamma}$ is an arbitrary collection of $d$-closed sets with $\mu(B_{\gamma}) = 1$ for all $\gamma\in\Gamma$, then the $d$-closed set $B:=\bigcap_{\gamma\in\Gamma}B_{\gamma}$ satisfies $\mu(B)=1$.
	\end{lemma}
	
	\begin{proof}
		By taking complements, the result is equivalent to the following:
		If $\{U_{\gamma}\}_{\gamma\in\Gamma}$ is an arbitrary collection of $d$-open sets with $\mu(U_{\gamma}) = 0$ for all $\gamma\in\Gamma$, then the $d$-open set $U:=\bigcup_{\gamma\in\Gamma}U_{\gamma}$ satisfies $\mu(U)=1$.
		To prove this, we use the fact that $(\mathcal{X},d)$ is a separable metric space to get a countable collection $\{U_n\}_{n\in\N}$ of $d$-open sets which form a basis for the topology generated by $d$.
		(For example, let $\{U_n\}_{n\in\N}$ consist of all $d$-open balls with rational radii and whose centers form a countable dense subset of $\mathcal{X}$.)
		For each $\gamma\in\Gamma$ we can get an index set $N(\gamma)\subseteq \N$ such that $U_{\gamma}=\bigcup_{n\in N(\Gamma)}U_n$, and it follows that we have $U=\bigcup_{n\in N}U_n$, where $N:=\bigcup_{\gamma\in\Gamma}N(\gamma)$ is a countable index set.
		Since
		\begin{equation*}
			\mu(U) = \mu\left(\bigcup_{n\in N}U_n\right) \le \sum_{n\in N}\mu(U_n),
		\end{equation*}
		it suffices to prove $\mu(U_n) = 0$ for every $n\in N$.
		Indeed, for any $n\in N$ there exists $\gamma\in\Gamma$ with $n\in N(\gamma)$ with $U_n\subseteq U_{\gamma}$ hence $\mu(U_n) \le \mu(U_{\gamma}) = 0$.
	\end{proof}
	
	\begin{lemma}\label{lem:KL-exp-mom-weakp}
		Suppose $(\mathcal{X},d)$ is a Polish metric space and $\{\mu_n\}_{n\in\N}$ and $\mu$ in $\mathcal{P}_{p}(\mathcal{X})$ are such that we have $H(\mu_n | \mu) \to 0$ and $\int_{\mathcal{X}}\exp(\alpha d^p(x,y))\diff\mu(y) < \infty$ for some $x\in \mathcal{X}$ and $\alpha > 0$.
		Then, $\mu_n\to \mu$ in $\weak^{p}$.
	\end{lemma}
	
	\begin{proof}
		First, note by Pinsker's inequality \cite[Lemma~2.5]{Tsybakov} that $H(\mu_n|\mu) \to 0$ implies $\mu_n\to \mu$ in $\weak$.
		Moreover, by the Portmanteau lemma, since $d^p(x,\cdot):X\to \R$ is non-negative and continuous, we have
		\begin{equation}\label{eqn:wp-2}
			\int_{\mathcal{X}}d^p(x,y)\diff\mu(y)\le \liminf_{n\to\infty}\int_{\mathcal{X}} d^p(x,y)\diff\mu_n(y).
		\end{equation}
		Next recall that the Donsker-Varadhan variational principle  \cite[Lemma~6.2.13]{DZ} states
		\begin{equation}\label{eqn:Donsker-Varadhan}
			H(\mu_n | \mu) = \sup_{\phi\in b\mathcal{B}(\mathcal{X})}\left(\int_{\mathcal{X}} \phi\diff\mu_n - \log\left(\int_{\mathcal{X}}\exp(\phi)\diff\mu\right)\right),
		\end{equation}
		where $b\mathcal{B}(\mathcal{X})$ represents the space of all bounded measurable functions from $\mathcal{X}$ to $\R$.
		Now fix $\beta \in (0,\alpha)$.
		For $R > 0$ arbitrary, set $\phi_R:\mathcal{X}\to\R$ via $\phi_R(y) := \beta d^{p}(x,y)\ind_{\bar B^d_R(x)}(y)$ for $y\in \mathcal{X}$, and note that $\phi_R\in b\mathcal{B}(\mathcal{X})$.
		Moreover, we have $\phi_R(y)\uparrow \beta d^p(x,y)$ as $R\to\infty$ for all $y\in \mathcal{X}$.
		By applying \eqref{eqn:Donsker-Varadhan} for each $R > 0$  we have
		\begin{equation*}
			H(\mu_n | \mu) \ge \int_{\mathcal{X}} \phi_R\diff\mu_n - \log\left(\int_{\mathcal{X}}\exp(\phi_R)\diff\mu\right),
		\end{equation*}
		so taking $R\to \infty$ and applying monotone convergence gives
		\begin{equation*}
			H(\mu_n | \mu) \ge \int_{\mathcal{X}}\beta d^p(x,y)\diff\mu_n(y) - \log\left(\int_{\mathcal{X}}\exp(\beta d^p(x,y))\diff\mu(y)\right).
		\end{equation*}
		Now we send $n\to\infty$ and rearrange to get
		\begin{equation*}
			\limsup_{n\to\infty}\int_{\mathcal{X}}d^p(x,y)\diff\mu_n(y) \le \frac{1}{\beta}\log\left(\int_{\mathcal{X}}\exp(\beta d^p(x,y))\diff\mu(y)\right).
		\end{equation*}
		The last step is to take the limit on the right side as $\beta\to 0$.
		To do this, we define the function $f:[0,\alpha]\to\R$ via $f(\beta) := \int_{\mathcal{X}}\exp(\beta d^p(x,y))\diff\mu(y)$, which obviously satisfies $f(0) = 1$, and we note that the desired limit is exactly $f'(0)$.
		In fact, we can see from dominated convergence that $f$ is differentiable at 0 and that we can interchange differentiation and integration, since we have
		\begin{equation*}
			\frac{f(\beta)}{\beta} = \int_{\mathcal{X}}\frac{\exp(\beta d^p(x,y))-1}{\beta}\diff\mu(y),
		\end{equation*}
		and since the integrand is upper bounded, uniformly over $\beta\in [0,\alpha]$, by
		\begin{equation*}
			\frac{\exp(\beta d^p(x,y))-1}{\beta} \le \exp(\beta d^p(x,y))  \le \exp(\alpha d^p(x,y)),
		\end{equation*}
		which is $\mu$-integrable by assumption.
		Therefore,
		\begin{equation}\label{eqn:wp-1}
			\limsup_{n\to\infty}\int_{\mathcal{X}}d^p(x,y)\diff\mu_n(y) \le f'(0) = \int_{\mathcal{X}}d^p(x,y)\diff\mu(y).
		\end{equation}
		Finally, combining \eqref{eqn:wp-1} and \eqref{eqn:wp-2} yields $\mu_n\to \mu$ in $\weak^p$.
	\end{proof}
	
	\section{Proof of Proposition~\ref{prop:bdd}}
	
	Let $L$ denote the set of all $\ell\in \{0,1,\ldots, k\}$ for which there exists $z_{\ell}\in \mathcal{X}$ and $r_{\ell} > 0$ such that for all sufficiently large $n\in\N$ and all $S_n\in C_p(\mu_n,k_n,R_n;\varepsilon_n)$, we have $\#(S_n\cap B_{r_{\ell}}(x_{\ell})) \ge \ell$.
	We claim that $L = \{0,1,\ldots, k\}$; if this holds, then the result follows by taking $z := z_{k}$ and $r := r_{k}$.
	Since clearly $0\in L$, we see that $L$ is non-empty.
	Now, we assume for the sake of contradiction that $L\neq \{0,1,\ldots, k\}$.
	
	Under this assumption, we can let $\ell$ denote the largest element of $L$.
	Thus, $\ell\in L$ and $\ell+1\neq L$.
	Now use $\ell\in L$ to get some $z_{\ell}\in \mathcal{X}$ and $r_{\ell} > 0$ such that for all sufficiently large $n\in\N$ and all $S_n\in C_p(\mu_n,k_n,R_n;\varepsilon_n)$, we have,
	\begin{equation*}
		\#(S_n\cap B_{r_{\ell}}(z_{\ell})) \ge \ell.
	\end{equation*}
	Next, use $\ell+1\notin L$ inductively to get a sequence $\{n_j\}_{j\in\N}$ with the following property:
	For each $j\in\N$ there exists $S_j\in C_p(\mu_{n_j},k_{n_j},R_{n_j};\varepsilon_{n_j})$ with
	\begin{equation*}
		\#(S_j\cap B_{r_{\ell}+j}(z_{\ell})) < \ell+1.
	\end{equation*}
	On the other hand, we also have
	\begin{equation*}
		\#(S_j\cap B_{r_{\ell}+j}(z_{\ell})) \ge \#(S_n\cap B_{r_{\ell}}(z_{\ell})) \ge \ell.
	\end{equation*}
	Since these finite cardinalities are all integers, this means we have
	\begin{equation*}
		\#(S_j\cap B_{r_{\ell}+j}(z_{\ell})) = \ell
	\end{equation*}
	for all $j\in\N$.
	In particular, a point $x\in S_j\setminus B_{r_{\ell}+j}(z_{\ell})$ for $j\in\N$ must have $d(x,z_{\ell}) \ge j+r_{\ell}$.
	
	Next, we define $T_j := S_j \cap B_{r_{\ell}}(z_{\ell})$ for all $j\in\N$, and we investigate the asymptotics of the excess loss $W_p(T_j,\mu_{n_j}) - W_p(S_j,\mu_{n_j})$ as $j\to\infty$.
	To do this, we get a probability space $(\Omega,\F,\P)$, with expectation $\E$, and random variables $\{Y^j\}_{j\in\N}$ and $Y$ as in Lemma~\ref{lem:Skor}.
	Note that by construction we have $S_j\setminus T_j \subseteq \mathcal{X}\setminus B_{r_{\ell}+j}(z_{\ell})$, hence
	\begin{equation*}
		|d(Y,T_j) - d(Y,S_j)|\to 0
	\end{equation*}
	almost surely.
	Also we have
	\begin{equation*}
		|d(Y^{n_j},S_j) - d(Y,S_j)| \le d(Y^{n_j},Y) \to 0
	\end{equation*}
	and
	\begin{equation*}
		|d(Y^{n_j},T_j) - d(Y,T_j)| \le d(Y^{n_j},Y) \to 0
	\end{equation*}
	both almost surely.
	Combining the three preceding displays with the triangle inequality gives
	\begin{equation}\label{eqn:main-1}
		|d(Y^{n_j},T_j)-d(Y^{n_j},S_j)|\to 0
	\end{equation}
	almost surely.
	Now recall the elementary fact that if $\{a_n\}_{n\in\N}$  and $\{b_n\}_{n\in\N}$  are sequences with $|a_n-b_n|\to 0$ such that $\{b_n\}_{n\in\N}$ is bounded, then we have $|a_n^p-b_n^p|\to 0$ for any fixed $p\ge 1$.
	Thus, combining \eqref{eqn:main-1} with the fact that $\{d(Y^{n_j},T_j)\}_{j\in\N}$ is bounded, we conclude
	\begin{equation}\label{eqn:main-2}
		|d^p(Y^{n_j},T_j)-d^p(Y^{n_j},S_j)|\to 0
	\end{equation}
	almost surely.
	Also note by equation \eqref{eqn:dist-comparison} that we have
	\begin{align*}
		|d^p(Y^{n_j},T_j)-d^p(Y^{n_j},S_j)| &\le d^p(Y^{n_j},T_j) \\
		&\le 2^{p-1}(d^p(Y^{n_j},z_{\ell}) + r_{\ell}^p).
	\end{align*}
	Since $\mu_{n_j}\to \mu$ in $\weakp$ implies that $\{d^p(Y^{n_j},z_{\ell})\}_{n\in\N}$ is uniformly integrable, we conclude that $\{d^p(Y^{n_j},T_j)-d^p(Y^{n_j},S_j)\}_{n\in\N}$ is uniformly integrable.
	Combining this with \eqref{eqn:main-2} yields
	\begin{equation*}
		\E[d^p(Y^{n_j},T_j)-d^p(Y^{n_j},S_j)]\to 0
	\end{equation*}
	which rearranges to
	\begin{equation}\label{eqn:main-4}
		\limsup_{j\to\infty}W_p(T_j,\mu_{n_j}) \le \liminf_{j\to\infty}W_p(S_j,\mu_{n_j})
	\end{equation}
	as desired.
	
	Finally, let us show that this analysis gives a contradiction.
	To do this, take an arbitrary $S'\subseteq R$ with $1\le \#S'\le k$.
	By Lemma~\ref{lem:Kur-lower-subseq} we can get a subsequence $\{j_i\}_{i\in\N}$ and a set $S_i'\subseteq R_{n_{j_{i}}}$ with $\#S_i' = \#S_{j_{i}}$ for each $i\in\N$, such that we have $S'_i\to S'$ in $\dHaus$.
	Then Lemma~4 in the main body implies
	\begin{equation*}
		W_p(S_{j_{i}},\mu_{n_{j_{i}}}) \le W_p(S_i',\mu_{n_{j_{i}}}) + \varepsilon_{n_{j_{i}}} \to W_p(S',\mu)
	\end{equation*}
	as $i\to\infty$.
	Combining this with \eqref{eqn:main-4} and applying Lemma~4 and Lemma~5 in the main body, we get
	\begin{align*}
		\limsup_{j\to\infty}W_p(T_{j},\mu) &= \limsup_{j\to\infty}W_p(T_{j},\mu_{n_{j}}) \\
		&\le \liminf_{j\to\infty}W_p(S_j,\mu_{n_j}) \\
		&\le \liminf_{i\to\infty}W_p(S_{j_i},\mu_{n_{j_i}}) \\
		&\le \liminf_{i\to\infty}(W_p(S_i',\mu_{n_{j_{i}}}) + \varepsilon_{n_{j_{i}}}) \\
		&= W_p(S',\mu).
	\end{align*}
	Taking the infimum over all feasible $S'$ and using the non-singularity of $(\mu,k,R)$, we have
	\begin{equation*}
		\limsup_{i\to\infty}W_p(T_{j_{i}},\mu_{n_{j_{i}}}) \le m_{k,p}(\mu,R) < m_{\ell,p}(\mu,R).
	\end{equation*}
	But this is a contradiction since we have $\#T_{j_{i}} = \ell$ for all $i\in\N$ by construction.
	Hence, we must have $L = \{0,1,\ldots, k\}$, and the result is proved.
	
	\section{Proof of Proposition~\ref{prop:elbow-k-cts}}
	
	First, we claim that we have $m_{k,p}(\mu) \to 0$ as $k\to\infty$.
	To do this, use Prokhorov's theorem to get, for each $\eta>0$, a compact $L_{\eta}\subseteq \mathcal{X}$ such that $\mu(\mathcal{X}\setminus L_{\eta})\le \eta$.
	In particular, we have $\ind\{y\notin L_{\eta}\}\to 0$ as $\eta\to0$ holding $\mu$-almost surely for all $y\in \mathcal{X}$.
	Now fix $o\in \mathcal{X}$, let $N_{\eta}^0$ be an $\eta^{1/p}$-net of $L_{\eta}$, and set $N_{\eta} := N_{\eta}^{0}\cup\{o\}$.
	It follows that we have
	\begin{align*}
		\int_{\mathcal{X}}\min_{x\in N_{\eta}}d^p(x,y)\diff\mu(y) &= \int_{L_{\eta}}\min_{x\in N_{\eta}}d^p(x,y)\diff\mu(y) + \int_{X\setminus L_{\eta}}\min_{x\in N_{\eta}}d^p(x,y)\diff\mu(y) \\
		&\le \eta\cdot\mu(L_{\eta}) + \int_{\mathcal{X}\setminus L_{\eta}}d^p(o,y)\diff\mu(y) \to 0
	\end{align*}
	as $\eta\to 0$, where we applied dominated convergence to the last term.
	Writing $\ell_{\eta} := \#N_{\eta}$ for $\eta>0$, this implies $m_{\ell_{\eta},p}(\mu) \to 0$ as $\eta\to 0$.
	Since $\{\ell_{\eta}\}_{\eta>0}$ is non-decreasing as $\eta\to 0$, this further implies $m_{k,p}(\mu)\to 0$ as $k\to\infty$.
	
	Now we consider the value $M := \max\{\Delta^2m_{k,p}(\mu): k\in\N, k \ge 2\}$.
	It is clear that $m_{k,p}(\mu)\to 0$ implies $\Delta^2m_{k,p}(\mu)\to 0$ hence $M<\infty$.
	Next let us also show $M>0$.
	To do this, assume for the sake of contradiction that $M\le 0$, so $\Delta^2m_{k,p}(\mu) \le 0$ for all $k\ge 2$.
	Then for all $2\le j \le j'$, we have
	\begin{equation*}
		m_{j,p}(\mu) - m_{j+1,p}(\mu) - (m_{j',p}(\mu)-m_{j'+1,p}(\mu)) =\sum_{k=j}^{j'}\Delta^2m_{k,p}(\mu) \le 0.
	\end{equation*}
	Sending $j'\to\infty$, we conclude $m_{j,p}(\mu) \le m_{j+1,p}(\mu)$ for all $j\ge 2$.
	In other words $j\mapsto m_{j,p}(\mu)$ is non-decreasing for $j\ge 2$.
	But it is obviously non-increasing by definition, so it must in fact be constant.
	Now note that the non-singularity of $(\mu,3,\mathcal{X})$ implies $m_{2,p}(\mu) > 0$, hence $\liminf_{k\to\infty}m_{k,p}(\mu) > 0$.
	This contradicts the conclusion of the preceding paragraph, so we must have $0<M<\infty$.
	Furthermore, by assumption there is a unique $k_{\ast} :=\kelbow_p(\mu) \in\N$ such that $M = \Delta^2m_{k_{\ast},p}(\mu)$.
	
	Now suppose that $\{\mu_n\}_{n\in\N}$ in $\mathcal{P}_p(\mathcal{X})$ have $\mu_n\to\mu$ in $\weak^p$.
	The result is proved if we can show that for each subsequence $\{n_j\}_{j\in\N}$ there exists a further subsequence $\{j_i\}_{i\in\N}$ satisfying $\kelbow_p(\mu_{n_{j_i}})\to k_{\ast}$ as $i\to\infty$.
	To do this, we use the construction from above to choose $\eta>0$ sufficiently small so that
	\begin{equation*}
		\int_{\mathcal{X}\setminus L_{\eta}}d^p(o,y)\diff\mu(y) \le \frac{M}{16}.
	\end{equation*}
	Then, we set $K:=L_{\eta}$, we let $N^0$ denote a $\frac{1}{2}(\frac{M}{8})^{1/p}$-net of $K$, and we define $N:=N^0\cup\{o\}$ and $\ell:=\#N$.
	Also use Lemma~\ref{lem:Skor} to construct a probability space $(\Omega,\F,\P)$, with expectation denoted $\E$, on which are defined random variables $\{Y^n\}_{n\in\N}$ and $Y$ with laws $\{\mu_{n}\}_{n\in\N}$ and $\mu$, respectively, such that we have $\E[d^p(Y^n,Y)]\to 0$ as $n\to\infty$.
	Now observe that for any subsequence $\{n_j\}_{j\in\N}$ we can choose a further subsequence $\{j_i\}_{i\in\N}$ satisfying both
	\begin{equation}\label{eqn:elbows-1}
		\limsup_{j\to\infty}\sup_{k\ge \ell}m_{k,p}(\mu_{n_j}) = \lim_{i\to\infty}\sup_{k\ge \ell}m_{k,p}(\mu_{n_{j_i}})
	\end{equation}
	and
	\begin{equation}\label{eqn:elbows-2}
		\E[d^p(Y^{n_{j_i}},Y)]\le 2^{-i} \qquad\textrm{ for all } \qquad i\in\N.
	\end{equation}
	Now write $a_i := i^22^{-i}$, and observe by Markov's inequality that we have
	\begin{equation*}
		\sum_{i\in\N}\P\left(d^p(Y^i,Y)\ge a_i\right) \le \sum_{i\in\N}\frac{\E\left[d^p(Y^i,Y)\right]}{a_i} \le \sum_{i\in\N}\frac{1}{i^2} < \infty.
	\end{equation*}
	Thus, by Borel-Cantelli, we have $\P(d^p(Y^i,Y)\le a_i \text{ for sufficiently large } i\in\N) = 1$.
	
	Moreover, if we define $K_i:= \{y\in \mathcal{X}: d^p(y,K)\le a_i\}$ for each $i\in\N$, then we have
	\begin{equation*}
		\limsup_{i\to\infty}\ind\{Y^i\notin K_i\} \le \ind\{Y\notin K\}
	\end{equation*}
	almost surely.
	Now we can apply Fatou's lemma to get
	\begin{align*}
		\limsup_{i\to\infty}\int_{\mathcal{X}\setminus K_i}d^p(o,y)\diff\mu_{n_{j_i}}(y) &= \limsup_{i\to\infty}\E\left[d^p(o,Y^i)\ind\{Y^i\notin K_i\}\right] \\
		&\le \E\left[\limsup_{i\to\infty}d^p(o,Y^i)\ind\{Y^i\notin K_i\}\right] \\
		&\le \E\left[d^p(o,Y)\ind\{Y\notin K\}\right] \\
		&= \int_{\mathcal{X}\setminus K}d^p(o,y)\diff\mu(y).
	\end{align*}
	By equation \eqref{eqn:dist-comparison} in the main body, we also have $d^p(y,N) \le \frac{M}{16}+2^{p-1}a_i$ for all $y\in K_i$.
	Therefore,
	\begin{equation*}
		\begin{split}
			\sup_{k\ge \ell}m_{k,p}(\mu_{n_{j_i}}) &= \sup_{k\ge \ell} \inf_{\substack{S\subseteq X \\ 1\le\#S\le k}}\int_{\mathcal{X}}\min_{x\in S}d^p(x,y)\diff\mu_{n_{j_i}}(y) \\
			&\le \int_{\mathcal{X}}\min_{x\in N}d^p(x,y)\diff\mu_{n_{j_i}}(y) \\
			&= \int_{K_i}\min_{x\in N}d^p(x,y)\diff\mu_{n_{j_i}}(y) + \int_{\mathcal{X}\setminus K_i}\min_{x\in N}d^p(x,y)\diff\mu_{n_{j_i}}(y) \\
			&\le \mu_{n_{j_i}}(K_i)\left(\frac{M}{16} +2^{p-1}a_i\right) + \int_{\mathcal{X}\setminus K_i}d^p(o,y)\diff\mu_{n_{j_i}}(y).
		\end{split}
	\end{equation*}
	Thus, taking $i\to\infty$ yields
	\begin{equation*}
		\limsup_{j\to\infty}\sup_{k\ge \ell}m_{k,p}(\mu_{n_{j}}) = \lim_{i\to\infty}\sup_{k\ge \ell}m_{k,p}(\mu_{n_{j_i}}) \le \frac{M}{8}.
	\end{equation*}
	Now simply apply the triangle inequality to get
	\begin{equation*}
		\limsup_{j\to\infty}\sup_{k\ge \ell}\Delta^2m_{k,p}(\mu_{n_j}) \le \frac{M}{2}.
	\end{equation*}
	This means that, for sufficiently large $j\in\N$, the maximum of $\{\Delta^2m_{k,p}(\mu_{n_j}): k\in\N\}$ is not achieved on $\{\ell,\ell+1,\ldots\}$.
	
	Finally, set
	\begin{equation*}
		\varepsilon := \min\left\{|\Delta^2m_{k_{\ast},p}(\mu)-\Delta^2m_{k,p}(\mu)|: k\le \ell, k\neq k_{\ast}\right\} >0.
	\end{equation*}
	By Lemma~\ref{lem:nonsingular-limits} in the main body, there is sufficiently large $j\in\N$ such that we have $|m_{k,p}(\mu_{n_j})-m_{k,p}(\mu)| < \frac{\varepsilon}{8}$ for all $k\le \ell$.
	Combining this with the above and the triangle inequality shows that, for sufficiently large $j\in\N$, we have
	\begin{equation*}
		\arg\max\{\Delta^2m_{k,p}(\mu_{n_j}): k\in\N \} = \arg\max\{\Delta^2m_{k,p}(\mu): k\in\N \} = \{k_{\ast}\},
	\end{equation*}
	hence $\kelbow_p(\mu_{n_j}) = k_{\ast}$.
	

\end{document}